\newtheorem{theorem}{Theorem}[section]
\newtheorem{lemma}[theorem]{Lemma}
\newtheorem{proposition}[theorem]{Proposition}
\newtheorem{corollary}[theorem]{Corollary}
\theoremstyle{definition}
\newtheorem{definition}[theorem]{Definition}
\newtheorem{remark}[theorem]{Remark}
\newcommand{\op}[1]{\operatorname{#1}}
\newcommand{\newterm}{\textsf}
\newcommand{\dbcoh}[1]{\operatorname{D}^{\operatorname{b}}(\operatorname{coh }#1)}
\newcommand{\dabs}{\mathrm{D}}
\newcommand{\gm}{\mathbb{G}_m}
\newcommand{\I}{\mathcal I}
\newcommand{\J}{\mathcal J}
\newcommand{\Hom}{\text{Hom}}
\def\ker{\op{\mbox{Ker}}}
\def\Z{\op{\mathbb{Z}}}
\def\C{\op{\mathbb{C}}}
\def\Q{\op{\mathbb{Q}}}
\def\O{\op{\mathcal{O}}}
\def\A{\op{\mathbb{A}}}
\def\Gm{\op{\mathbb{G}_m}}
\def\I{\op{\mathcal{I}}}
\def\J{\op{\mathcal{J}}}
\title[Exceptional Collections for Mirrors of Invertible Polynomials]{Exceptional Collections for Mirrors of Invertible Polynomials}
\author[Favero]{David Favero}
\address{
	\begin{tabular}{l}
		David Favero \\
		\hspace{.1in} University of Minnesota \\
		\hspace{.1in} 454 Vincent Hall, 206 Church Street SE, Minneapolis, MN 55455, USA  \smallskip \\
        \hspace{.1in} Korea Institute for Advanced Study \\ \hspace{.1in} 85 Hoegiro, Dongdaemun-gu, Seoul, Republic of Korea 02455\\
		\hspace{.1in} Email: {\bf favero@umn.edu} \\
	\end{tabular}
}
\author[Kaplan]{Daniel Kaplan}
\address{
  \begin{tabular}{l}
   Daniel Kaplan \\
   \hspace{.1in} Universiteit Hasselt \\
   \hspace{.1in} Universitaire Campus, 3590 Diepenbeek, Belgium \\
   \hspace{.1in} Email: {\bf daniel.kaplan@uhasselt.be} \\
  \end{tabular}
}
\author[Kelly]{Tyler L. Kelly}
\address{
  \begin{tabular}{l}
   Tyler L. Kelly \\
   \hspace{.1in} University of Birmingham \\
   \hspace{.1in} School of Mathematics, Edgbaston, Birmingham B15 2TT,  United Kingdom \\
   \hspace{.1in} Email: {\bf t.kelly.1@bham.ac.uk} \\
  \end{tabular}
}
\numberwithin{equation}{section}
\begin{document}


\begin{abstract}
We prove the existence of a full exceptional collection for the derived category of equivariant matrix factorizations of an invertible polynomial with its maximal symmetry group.  This proves a conjecture of Hirano--Ouchi.  
In the Gorenstein case, we also prove a stronger version of this conjecture due to Takahashi.  Namely, that the full exceptional collection is strong.
\end{abstract}

\maketitle
\setcounter{tocdepth}{1}
 \section{Introduction}

Let $\C$ be an algebraically closed field of characteristic zero. We say that a polynomial $w\in \C[x_1,\ldots, x_n]$ is \newterm{invertible} if it is of the form
\[
w = \sum_{i=1}^n \prod_{j=1}^n x_j^{a_{ij}}
\]
where $A=(a_{ij})_{i,j=1}^n$ is a non-negative integer-valued matrix satisfying:
\begin{enumerate}
\item $A$ is invertible over $\Q$;
\item $w$ is quasihomogeneous, i.e., there exists positive integers $q_j$ such that $d:= \sum_{j=1}^n q_j a_{ij}$ is constant for all $i$; and
\item $w$ is quasi-smooth, i.e., $w:\mathbb{A}^n \to \mathbb{A}^1$ has exactly one critical point (at the origin).
\end{enumerate}

Let $\Gm$ be the multiplicative torus. We may consider the following group of symmetries:
\begin{equation} \label{eq: GammaW}
\Gamma_w := \{ (t_1, \ldots, t_{n+1}) \in \mathbb{G}_m^{n+1} \ | \ w(t_1x_1, \ldots, t_nx_n) = t_{n+1}w(x_1,\ldots, x_n) \}.
\end{equation}
The group $\Gamma_w$ acts on $\mathbb{A}^n$ by projecting onto the first $n$ coordinates and then acting diagonally. The Landau-Ginzburg model $(\mathbb{A}^n, \Gamma_w, w)$ is a proposed mirror of the transposed invertible polynomial
\[
w^T = \sum_{i=1}^n \prod_{j=1}^n x_j^{a_{ji}}.
\]

Kontsevich's Homological Mirror Symmetry Conjecture predicts that the Fukaya-Seidel category of $w^T$ \cite{Sei08} is equivalent to the (gauged) matrix factorization category  $\dabs[\mathbb{A}^n, \Gamma_w, w]$ \cite{Pos11, BFK14a}. A few cases of this equivalence have been proven. When $w$ is a Fermat polynomial, meaning $w = \sum_{i=1}^n x_i^{r_i}$, this equivalence is proven by Futaki and Ueda \cite{FU09, FU11}. When $n=2$, the conjecture has been proven by Habermann and Smith \cite{HS19}. The approach of Futaki-Ueda and Habermann-Smith involves finding matching tilting objects for $\dabs{[\mathbb{A}^n, \Gamma_w, w]}$ and $\mathcal{F}(w^T)$. 
This makes the existence of a tilting object on $\dabs{[\mathbb{A}^n, \Gamma_w, w]}$ for arbitrary $n$ and $w$ desirable.

 In fact, the existence of such a tilting object was first conjectured informally by Takahashi \cite{TakashiSlides} during his presentation at the University of Miami in 2009.  Therein, he demonstrated that the results of \cite{KST1, KST2} imply new cases of homological mirror symmetry when $n=3$. In the literature, this existence has been conjectured by Ebeling and Takahashi in three-variables \cite{ET11}, and by Lekili and Ueda in general \cite[Conjecture 1.3]{LU18}. The conjecture was weakened more recently by Hirano and Ouchi who ask for the existence of a full exceptional collection (which is not necessarily strong) \cite[Conjecture 1.4]{HO} (see \S\ref{subsec: exceptional} for definitions).
 
 Due to the Kreuzer-Skarke classification of invertible polynomials \cite{KS92} (in fact this classification can be traced further back to Orlik-Wagreich \cite{OW71}), we know that any invertible polynomial, up to permutation of variables, can be written as a Thom-Sebastiani sum of three types of polynomials:

\begin{enumerate}
\item Fermat type: $w = x^r$,
\item Chain type: $w = x_1^{a_1}x_2 + x_2^{a_2} x_3 + \ldots + x_{n-1}^{a_{n-1}}x_n + x_n^{a_n}$, and 
\item Loop type: $w = x_1^{a_1}x_2 + x_2^{a_2} x_3 + \ldots + x_{n-1}^{a_{n-1}}x_n + x_n^{a_n}x_1$.
\end{enumerate}
By Corollary 2.40 of \cite{BFK14b}, the conjectures above on the existence of a full exceptional collection or a tilting object reduce to studying indecomposable invertible polynomials that are of any one given type. 

There is a long history of partial results for various cases using the Kreuzer-Skarke classification. The case of invertible polynomials consisting only of Fermat monomials was established by Takahashi in \cite{Tak1} and the case of certain invertible polynomials in three variables was proven by Kajiura, Saito and Takahashi \cite{KST1, KST2}. In the chain case, Hirano and Ouchi proved the existence of a full exceptional collection  \cite[Corollary 1.6]{HO} and Aramaki and Takahashi were able to then prove the existence of a Lefschetz decomposition \cite{AT19}.\footnote{In the time since the first version of this paper, Hirano and Ouchi extended their result to provide a strong exceptional collection for chain type polynomials.} Moreover, Kravets has proven when $n\le 3$ that $\dabs{[\mathbb{A}^n, \Gamma_w, w]}$ has a full strong exceptional collection \cite{Kra19}.
 
In the present paper, we use variation of GIT techniques for derived categories  \cite{HL15,BFK12} in order to construct a full exceptional collection in all three cases uniformly. 
\begin{theorem}\label{proof of conjecture}
Conjecture 1.4 of \cite{HO} is true: for any invertible polynomial $w$, the singularity category $\dabs{[\mathbb{A}^n, \Gamma_w, w]}$ has a full exceptional collection whose length is equal to the Milnor number of $w^T$. Furthermore, if the dual polynomial $w^T$ has weights $r_i$ and degree $d^T$ such that  $r_i$ divides $d^T$ for all $i$, then Conjecture 1.3 of \cite{LU18} holds: the singularity category $\dabs{[\mathbb{A}^n, \Gamma_w, w]}$ has a tilting object.
\end{theorem}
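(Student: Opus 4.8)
The plan is to reduce to a single indecomposable invertible polynomial and then realize its maximal-symmetry Landau--Ginzburg orbifold inside a variation of GIT problem, extracting the exceptional collection from an iterated sequence of wall-crossings via the window machinery of \cite{HL15, BFK12}. First I would make the Thom--Sebastiani reduction explicit: by Corollary 2.40 of \cite{BFK14b} a splitting $w = w_1 \oplus w_2$ induces $\dabs{[\mathbb{A}^n,\Gamma_w,w]} \simeq \dabs{[\mathbb{A}^{n_1},\Gamma_{w_1},w_1]} \otimes_{\C} \dabs{[\mathbb{A}^{n_2},\Gamma_{w_2},w_2]}$, the external tensor product of full exceptional collections is a full exceptional collection whose length is the product of the lengths, and this is compatible with the count since the Milnor number is multiplicative under Thom--Sebastiani, $\mu(w^T) = \mu(w_1^T)\,\mu(w_2^T)$; likewise an external tensor product of strong exceptional (hence tilting) objects is tilting, and the divisibility hypothesis $r_i \mid d^T$ is inherited by each transposed summand after clearing common factors from the weights. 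Hence by the Kreuzer--Skarke classification \cite{KS92} it suffices to treat one indecomposable $w$ of Fermat, chain, or loop type.

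Next I would set up the VGIT. For indecomposable $w$ the group $\Gamma_w$ fits in an extension $1 \to G_w \to \Gamma_w \to \mathbb{G}_m \to 1$ with $G_w$ the finite maximal diagonal symmetry group, so $[\mathbb{A}^n/\Gamma_w]$ is a toric Deligne--Mumford stack of rational Picard rank one on which $w$ is a regular function. I would enlarge this to a linear action of a torus $T$ on an affine space $V$ with a $T$-semi-invariant potential $f$ and a chamber structure on the stability parameters such that one chamber recovers $(\mathbb{A}^n,\Gamma_w,w)$ up to Kn\"orrer periodicity, and crossing each wall peels off, via the grade-restriction windows of \cite{HL15, BFK12}, an admissible subcategory generated by the matrix factorizations supported on a $T$-fixed locus, twisted by the finitely many characters lying in the window. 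Choosing the window width equal to the weight of $f$ along the contracted directions makes each such fixed-locus contribution equivalent to $\dbcoh{\mathrm{pt}}$, i.e.\ generated by a single exceptional object; for chain and loop type this wall-crossing is organized as an induction on the number of variables with the Fermat case as base, matching the "delete the last monomial" recursion used by Hirano--Ouchi \cite{HO} in the chain case.

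Running all the wall-crossings then presents $\dabs{[\mathbb{A}^n,\Gamma_w,w]}$ as an iterated semiorthogonal decomposition into $N$ copies of $\dbcoh{\mathrm{pt}}$, hence a full exceptional collection of length $N$ whose objects are stabilizations of the $\Gamma_w$-equivariant residue field $\C$ twisted by an explicit finite set of characters; identifying $N=\mu(w^T)$ is a combinatorial identity putting this character set in bijection with a monomial basis of the Jacobian ring of $w^T$ (equivalently, using Orlov's comparison between the graded singularity category of $\C[x]/w^T$ and the weighted-projective hypersurface $\{w^T=0\}$, together with additivity of the relevant Milnor numbers). For the strong statement, assuming $r_i \mid d^T$ makes $\C[x]/w^T$ Gorenstein with integral Gorenstein parameter, equivalently the dualizing sheaf of $[\mathbb{A}^n/\Gamma_w]$ is a genuine line bundle whose class sits at the edge of the window; then the window may be taken of minimal width, and the $\Hom$-complex between two twists $\C(\chi),\C(\chi')$ of the residue field, being a Koszul complex on the partials $\partial_i w$ internally shifted by $\chi'-\chi$, has cohomology concentrated in degree $0$ precisely because of the divisibility. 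This yields $\Ext^{>0}$-vanishing between the exceptional objects, hence strongness and a tilting object, which is Conjecture 6.1 of \cite{LU18}.

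I expect the main obstacle to be carrying out the VGIT uniformly across the three types: the loop case is the delicate one, since the corresponding orbifold is fractionally Calabi--Yau and carries no ample direction along which to run the wall-crossings, so one must instead exploit the cyclic structure of the exponent matrix to define the chambers. The second subtle point is matching the total length to $\mu(w^T)$ rather than $\mu(w)$ --- the asymmetry forces careful bookkeeping of characters through the induction --- and the third is the Koszul $\Ext$-vanishing of the final step, which is exactly where the Gorenstein hypothesis is genuinely used.
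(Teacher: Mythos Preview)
Your overall architecture---Thom--Sebastiani reduction via \cite{BFK14b}, Kreuzer--Skarke, then VGIT wall-crossing with grade-restriction windows---is the same as the paper's, and your description of the chain case (inducting down one variable at a time, as in \cite{HO}) is correct. But two steps in your outline are not actually carried out, and they are exactly where the content lies.

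\textbf{The loop case.} You flag this as the main obstacle and propose to ``exploit the cyclic structure of the exponent matrix to define the chambers,'' but this is not a method. The paper's key device is concrete: introduce an auxiliary variable $x_{n+1}$ and the single polynomial
\[
W = x_1^{a_1}x_2 + \cdots + x_{n-1}^{a_{n-1}}x_n + x_n^{a_n}x_1 x_{n+1}^{b},
\]
so that $W|_{x_{n+1}=1}$ is the loop $w_+$ and $W|_{x_n=1}$ is a \emph{chain} $w_-$. A one-parameter subgroup of $\Gamma_W$ (built from the maximal minors of $A_W$) has $X_\pm$ as its two GIT quotients, the fixed locus is the origin, and a direct check that $\mathcal I_\pm \subseteq \sqrt{\partial W, x_{n+1}}$ (resp.\ $x_n$) lets one shrink the semistable loci to $U_\pm = \{x_{n+1}\neq 0\}$, $\{x_n\neq 0\}$, recovering $(\A^n,\Gamma_{w_\pm},w_\pm)$. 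The single wall-crossing then embeds the chain category into the loop category up to a specific number of exceptional objects, namely $|\sum d_i| = \mu(w_+^T)-\mu(w_-^T)$, and one finishes by invoking the chain result. So the loop is handled in one step by passing to a chain, not by an intrinsic induction on loops; without this auxiliary $W$ your plan has no mechanism for the loop type.

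\textbf{The tilting object.} Your proposed argument---that the exceptional objects are all twists $\C(\chi)$ of the residue field and that the divisibility hypothesis forces the Koszul complex computing $\op{Hom}(\C(\chi),\C(\chi'))$ to be concentrated in degree $0$---is neither what the paper does nor, as stated, justified: the exceptional objects produced by successive wall-crossings are not a priori twists of a single object, and the degree-$0$ concentration claim would need a proof. The paper instead observes that when $r_i \mid d^T$ one may choose $b = d^T/r_n$ at each step, which makes $\sum d_i = 0$ and hence $\mu(w_+^T)=\mu(w_-^T)$; the wall-crossing is then an \emph{equivalence} (case (b) of the comparison theorem). Iterating these equivalences (``Kreuzer--Skarke cleaves'') reduces $w$ all the way to the Fermat polynomial $\sum x_i^{d^T/r_i}$, whose category already has a tilting object. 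This is where the divisibility hypothesis is actually used, and it sidesteps any direct $\Ext$-computation.
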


The techniques of the proof provide a recursive way to relate loop polynomials to chain polynomials and chain polynomials to Thom-Sebastiani sums of Fermat monomials and smaller chain polynomials. This recursive technique has been mirrored in the A-model by Polishchuk and Varolgunes to give very strong evidence of homological mirror symmetry for chain polynomials (up to some formal foundations currently missing on the generation of a certain exceptional collection of thimbles for Fukaya-Seidel categories of tame Landau-Ginzburg models) \cite{PV21}.

\begin{remark}
The divisibility condition in the theorem is equivalent to requiring that the coarse moduli space of $[\mathbb{A}^n / \Gamma_{w^T}]$ is Gorenstein.  In the chain case, this means $a_i$ divides $a_{i+1}$ for $1 \leq i < n$.  In the loop case, this means $a_1 = ... = a_n$.  For Thom-Sebastiani sums, it means the above on each summand.
\end{remark}

\begin{remark}
Theorem~\ref{proof of conjecture} can be interpreted as evidence for a Landau-Ginzburg version of Dubrovin's conjecture \cite{Dub98} as the Frobenius manifold associated to the LG model $(\mathbb{A}^n, \Gamma_w, w)$ is (generically) semi-simple.
\end{remark}

\begin{remark}
By Orlov's Theorem \cite[Theorem 3.11]{Orl09}, one can transform our exceptional collection into a collection of geometric objects living in the maximally-graded derived category of the corresponding hypersurface.  In \cite{FKK20}, it is shown that this collection cannot always consist of line bundles (for the example therein our collection has 32 objects and a bound of 24 line bundles is obtained).
\end{remark}

\begin{remark}
The proof could be made into an algorithm to produce the exceptional collection whose existence is stated in Theorem~\ref{proof of conjecture}.  This is simplest in the Gorenstein case, where the tilting object can be described.
\end{remark}

\subsection{Acknowledgments}
This project was initiated while the three authors visited the Fields Institute during the Thematic Program on the Homological Algebra of Mirror Symmetry.  We would like to thank the Fields Institute for providing this opportunity, their hospitality, excellent atmosphere, and superb afternoon tea.

The original version of this paper mistakenly claimed to provide a counterexample to Conjecture 1.3 of version 2 of the preprint \cite{LU18v2}.  We humbly thank Kazushi Ueda for immediately recognizing this issue and informing us of our blunder. This subject has since been delegated to the paper \cite{FKK20} where a true counterexample is given. We are also grateful to Ailsa Keating, Yanki Lekili, and Atsushi Takahashi for their conversations, comments and suggestions.  We lastly thank the referee for their useful comments.

The first-named author is grateful to the Natural Sciences and Engineering Research Council of Canada for support provided by a Canada Research Chair and Discovery Grant.
The second-named author is thankful for support by the Engineering and Physical Sciences Research Council (EPSRC) under Grant EP/S03062X/1. 
The third-named author acknowledges that this paper is based upon work supported by the EPSRC under Grant EP/N004922/2 and EP/S03062X/1, along with the Birmingham International Engagement Fund.
 \vspace{2.5mm}

\section{Background}

\subsection{The maximal symmetry group of a polynomial}

Let 
\[
W = \sum_{i=1}^k \prod_{j=1}^n x_j^{a_{ij}}
\]
 be a polynomial in $n$ variables with $k$ monomials. Viewing the $A_W^T = (a_{ji})$ as an integer valued matrix we obtain a right exact sequence
 \begin{equation}
 \Z^k \stackrel{A_W^T}{\longrightarrow} \Z^{n} \longrightarrow \op{coker}(A^T_W) \rightarrow 0
 \end{equation}
Augmenting $A_W^T$ by a row of $-1$s along the bottom, we get another right exact sequence
$$
\Z^k \stackrel{B^T_W}{\longrightarrow} \Z^{n+1} \longrightarrow \op{coker}(B^T_W) \rightarrow 0.
$$

Now apply $\op{Hom}(-, \gm)$ to the above to obtain a left exact sequence
$$
1 \longrightarrow \ker \widehat{B^T_W} \longrightarrow \gm^{n+1} \stackrel{\widehat{B^T_W}}{\longrightarrow} \gm^k.
$$
Note that 
\[
\widehat{B^T_W}(t_1,\ldots, t_{n+1})_i = (t_{n+1}^{-1}\prod_{j=1}^n t_j^{a_{ij}}).
\]
It follows directly from the definition that 
\[
\ker \widehat{B^T_W} = \Gamma_W
\]
where $\Gamma_W$ is defined as in Equation~\eqref{eq: GammaW}. 
Furthermore, when $A_W$ has full rank all the sequences above are exact.

By composing the inclusion $\Gamma_W \rightarrow \gm^{n+1}$  with the projection to the $i$th factor, we obtain characters $\chi_i: \Gamma_W \rightarrow \gm$ for each $i$.  Take $W_i$ to be the restriction of $W$ to the locus where $x_i=1$. 
Then, it is also easy to check that the following sequence is left exact
\begin{equation}
1 \longrightarrow \Gamma_{W_i}  \stackrel{f}{\longrightarrow}  \Gamma_W  \stackrel{\chi_i}{\longrightarrow} \mathbb G_m \label{eq: specializeGamma} 
\end{equation}
where  $f(t_1,\ldots, t_n) = (t_1, \ldots, t_{i-1}, 1, t_{i+1}, \ldots, t_{n})$.

\begin{remark} \label{rem: right exact}
If there exists weights $s_1, ..., s_n$ making $W$ homogeneous and $s_i \neq 0$ then the above sequence is also right exact.  The examples we have in mind are \eqref{eq: Wloopchain} and \eqref{eq: Wchainchain}.  These examples have $n+1$ variables and $W_{n}, W_{n+1}$ are quasi-homogeneous with positive weights.  Hence, the above sequence is exact for all $i$.
\end{remark}

\begin{lemma}  \label{lem: stack iso}
Assume there exists weights $s_j$ making $W$ homogeneous with $s_i \neq 0$.  Then, the inclusion induces an isomorphism of stacks
\[
[\A^n \backslash Z(x_i) / \Gamma_W] \cong  [\A^{n-1} / \Gamma_{W_i} ]
\]
so that $W$ corresponds to $W_i$.
\end{lemma}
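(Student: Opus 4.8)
The plan is to exhibit the isomorphism by hand using the explicit description of $\Gamma_W$ as a subgroup of $\gm^{k+1}$ together with the short exact sequence~\eqref{eq: specializeGamma}, which is exact under the stated homogeneity hypothesis by Remark~\ref{rem: right exact}. The key point is that the open substack $[\A^n \setminus Z(x_i)/\Gamma_W]$ is the locus where the $i$th coordinate is invertible, and the character $\chi_i\colon \Gamma_W \to \gm$ is precisely the action on that coordinate; so we can use $\chi_i$ to rigidify $x_i$ to $1$.

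\textbf{Step 1: Set up the map.} Let $U := \A^n \setminus Z(x_i)$ with its $\Gamma_W$-action. Since $W$ is homogeneous with $s_i \neq 0$, the sequence~\eqref{eq: specializeGamma} is short exact, so $\Gamma_W \cong \Gamma_{W_i} \rtimes \gm$ — more precisely, $\chi_i$ admits a section because the sequence of diagonalizable groups splits, or one may simply note $\Gamma_W/\Gamma_{W_i} \cong \gm$ via $\chi_i$. Consider the $\Gamma_W$-equivariant morphism $U \to \A^{n-1}$, $(x_1,\ldots,x_n) \mapsto (x_1 x_i^{-b_1}, \ldots, \widehat{x_i}, \ldots, x_n x_i^{-b_n})$ where the exponents $b_j$ are chosen so that each $x_j x_i^{-b_j}$ has trivial $\chi_i$-weight (i.e., so the target carries only the residual $\Gamma_{W_i}$-action); existence of such $b_j$ uses precisely that $\chi_i$ is surjective (equivalently $s_i\neq 0$), so the $\chi_i$-weight lattice of $\gm^n$ surjects onto $\Z$. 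This descends to a morphism of stacks $[U/\Gamma_W] \to [\A^{n-1}/\Gamma_{W_i}]$. Alternatively and more cleanly: the section of $\chi_i$ identifies $[U/\Gamma_W] \cong [(U/\gm)/\Gamma_{W_i}]$, and $U/\gm \cong \A^{n-1}$ via $x_i \mapsto 1$.

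\textbf{Step 2: Check it is an isomorphism.} Construct the inverse: $\A^{n-1} \hookrightarrow U$ by $x_i = 1$, which is $\Gamma_{W_i}$-equivariant and hence induces $[\A^{n-1}/\Gamma_{W_i}] \to [U/\Gamma_W]$ using $f$. One then verifies the two composites are isomorphic to the identity; this is a routine diagram chase on the groupoid presentations $U \times \Gamma_W \rightrightarrows U$ and $\A^{n-1} \times \Gamma_{W_i} \rightrightarrows \A^{n-1}$, where surjectivity of $\chi_i$ makes the map on objects a $\gm$-torsor and exactness of~\eqref{eq: specializeGamma} matches up the arrows. Finally, $W|_U$ is $x_i$-homogeneous of a fixed $\chi_i$-weight (again by the homogeneity hypothesis), so under the rigidification $x_i = 1$ it is carried to $W_i$, giving the last sentence.

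\textbf{The main obstacle} is bookkeeping: making sure the quotient by $\Gamma_W$ rather than by the auxiliary $\gm$ is handled correctly — in particular that the residual action on $\A^{n-1}$ really is $\Gamma_{W_i}$ and not some larger group — which is exactly what the exactness (not just left-exactness) of~\eqref{eq: specializeGamma} buys us, and why the hypothesis $s_i \neq 0$ (via Remark~\ref{rem: right exact}) is essential rather than cosmetic. Everything else is the standard fact that quotienting an open orbit-like locus by a group containing a coordinate-scaling $\gm$ is the same as rigidifying that coordinate and quotienting by the stabilizer.
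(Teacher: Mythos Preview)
Your approach is essentially the paper's: both use the exact sequence~\eqref{eq: specializeGamma} together with Remark~\ref{rem: right exact}, and then the standard fact that if $H \hookrightarrow G$ and $X \cong G \times^{H} Y$ as $G$-spaces then $[X/G] \cong [Y/H]$. The paper simply cites this last step as Lemma~4.22 of \cite{FK18}; you unpack it by hand via the inclusion $\{x_i=1\}\hookrightarrow U$ and a groupoid check, which is exactly what that lemma says.

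One quibble: in Step~1 you claim the short exact sequence of diagonalizable groups splits (equivalently, that integer exponents $b_j$ exist with $x_j x_i^{-b_j}$ of trivial $\chi_i$-weight). Surjectivity of $\chi_i$ alone does not give this---on characters the dual sequence is $0 \to \Z \to X(\Gamma_W) \to X(\Gamma_{W_i}) \to 0$, which need not split when $X(\Gamma_{W_i})$ has torsion. Fortunately your Step~2 does not actually use the splitting: surjectivity of $\chi_i$ is exactly what makes the $\Gamma_W$-orbit of $\{x_i=1\}$ sweep out all of $U$, and exactness identifies the stabilizer with $\Gamma_{W_i}$, so $U \cong \Gamma_W \times^{\Gamma_{W_i}} \A^{n-1}$ and you are done. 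I would simply drop the $b_j$/section discussion from Step~1 and go straight to the induced-space argument.
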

\begin{proof}
This follows immediately from \eqref{eq: specializeGamma}, Remark~\ref{rem: right exact},  and Lemma 4.22 of \cite{FK18}.
\end{proof}

\subsection{Exceptional Collections and Tilting}\label{subsec: exceptional}

Let $\mathcal{T}$ be a $\C$-linear triangulated category whose morphism spaces are finite-dimensional $\C$-vector spaces.   For an object $E \in \mathcal{T}$ and $l \in \Z$, we write $E[l]$ as the $l$-fold shift of $E$. 

\begin{definition} Consider $\mathcal{T}$ as above.
\begin{enumerate}[label=(\alph*)]
\item An object $E \in \mathcal{T}$ is called \emph{exceptional} if 
\[
\oplus_{l \in \Z} \Hom_{\mathcal{T}}(E, E[l]) = \Hom_{\mathcal{T}}(E, E) = \C \cdot \text{id}_{E}.
\]
\item A sequence $( E_{1}, \dots, E_{n} )$ of exceptional objects is called an \emph{exceptional collection} if 
\[
\Hom_{\mathcal{T}}(E_{j}, E_{i}[l]) = 0
\]
for all $l \in \Z$ and all $1 \leq i<j \leq n$.
\item An exceptional collection $( E_{1}, \dots, E_{n} )$ is called \emph{strong} if 
\[
\Hom_{\mathcal{T}}(E_{i}, E_{j}[l]) = 0
\]
for all $l \neq 0$ and all $i, j$.
\item An exceptional collection $( E_{1}, \dots, E_{n} )$ is called \emph{full} if it generates $\mathcal{T}$ (i.e. $\mathcal{T}$ is the smallest thick triangulated category containing the objects $E_{1}, \dots, E_{n}$).
\end{enumerate}
\end{definition}

\begin{definition}
An object $T \in \mathcal{T}$ is called \emph{tilting} if 
\begin{enumerate}[label=(\alph*)]
\item $T$ generates $\mathcal{T}$,
\item $\Hom_{\mathcal{T}}(T, T[l]) =0$ for $\l \neq 0$, and
\item the endomorphism algebra $\Hom_{\mathcal{T}}(T, T)$ has finite global dimension.
\end{enumerate}
\end{definition}

The following observation is standard.  We include a brief proof for the reader's convenience.
\begin{proposition}
If $(E_{1}, \dots, E_{n})$ is a full strong exceptional collection then $T = \bigoplus_{i} E_{i}$ is a tilting object. 
\end{proposition}

\begin{proof} First, $T$ generates $\mathcal{T}$ since $E_{1}, \dots, E_{n}$ generate $\mathcal{T}$. Second, by strongness, for $l \neq 0$,
\[
\Hom_{\mathcal{T}}(T, T[l]) = \Hom_{\mathcal{T}}(\oplus_{i} E_{i} , \oplus_{j} E_{j}[l]) =  \oplus_{i, j} \Hom_{\mathcal{T}}(E_{i}, E_{j}[l]) = 0.
\]
Third, the exceptionality of the collection ensures the global dimension of $\Hom_{\mathcal{T}}(T, T)$ is at most $n$, and hence finite.  
\end{proof}

\subsection{Factorization categories}

Let $G$ be an affine algebraic group acting on a smooth variety $X$ over $\C$.  Take $W$ to be a $G$-invariant section of an invertible $G$-equivariant sheaf $\mathcal{L}$, i.e., $W \in \Gamma(X, \mathcal{L})^G$. We call the data $(X, G, W)$ a \newterm{(gauged) Landau-Ginzburg model} and associate the \newterm{absolute derived category} $\dabs{[X, G, W]}$ to this. We refer the reader to \cite{Pos11, BFK14a, BFK14b, EP15, FK18} for background.

We recall the following result of Orlov \cite[Proposition 1.14]{Orl04} in the $G$-equivariant factorization setting.

\begin{proposition} \label{Orlov immersion}
Assume that $[X/G]$ has enough locally free sheaves. Let $i: U\hookrightarrow X$ be a $G$-equivariant open immersion so that the singular locus of $W$ is contained in $U$. Then the restriction
\[
i^*:\dabs{[X, G, W]}\rightarrow \dabs{[U, G, W]}
\]
is an equivalence of categories.
\end{proposition}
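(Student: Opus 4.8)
The plan is to adapt Orlov's proof of the analogous statement for singularity categories \cite{Orl04}: the hypothesis $\op{Sing}(W)\subseteq U$ plays exactly the role that ``the singular locus of $X$ lies in $U$'' plays there. As a preliminary reduction, recall that every object of $\dabs{[X,G,W]}$ is represented by a factorization with coherent components (and likewise over $[U,G,W]$), and that since $[X/G]$ has enough locally free sheaves and every coherent sheaf on $[U/G]$ extends to a coherent sheaf on $[X/G]$, the stack $[U/G]$ also has enough locally free sheaves; so we may assume the components are locally free whenever convenient.

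\textbf{Essential surjectivity.} Let $\mathcal F^U=(\mathcal F_1,\mathcal F_0,\phi_1,\phi_0)$ be a coherent factorization on $[U/G]$, with $\phi_0\phi_1=W\cdot\id$. Its pushforward $i_*\mathcal F^U$ is a quasi-coherent factorization of $W$ on $[X/G]$: the projection formula applies because $\mathcal L$ is invertible, and the relation $\phi_0\phi_1=W\cdot\id$ is preserved because $W$ is a global section. Since $[X/G]$ is Noetherian, choose coherent subsheaves $\mathcal G_0\subseteq i_*\mathcal F_0$ and $\mathcal G_1\subseteq i_*\mathcal F_1$ restricting to $\mathcal F_0,\mathcal F_1$, and set $\mathcal A_1:=\mathcal G_1+\mathcal L^{-1}\!\cdot(i_*\phi_0)(\mathcal G_0)$ and $\mathcal A_0:=\mathcal G_0+(i_*\phi_1)(\mathcal A_1)$. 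Using only $\phi_0\phi_1=W\cdot\id$ and $W\in\Gamma(X,\mathcal L)$, one checks that $(\mathcal A_1,\mathcal A_0)$ is a coherent subfactorization of $i_*\mathcal F^U$ — the point being that closing up under $\phi_0$ and $\phi_1$ terminates after one round because ``going around the loop'' only multiplies by $W$ — and $i^*(\mathcal A_1,\mathcal A_0)\cong\mathcal F^U$. Hence $i^*$ is essentially surjective.

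\textbf{Full faithfulness.} Represent an object $E$ by a factorization with locally free coherent components. The technical input is the standard description of morphisms in the absolute derived category \cite{BFK14a,EP15}: for such $E$ and any factorization $F$ one has
\[
\Hom_{\dabs{[X,G,W]}}(E,F[j])\;\cong\;\mathbb H^{j}\!\left([X/G],\ \mathcal{H}om^{\bullet}(E,F)\right),
\]
where $\mathcal{H}om^{\bullet}(E,F)=E^{\vee}\otimes F$; since the curvatures $-W$ and $W$ cancel, this is an honest $2$-periodic complex of coherent sheaves, and the same formula holds over $[U/G]$. Now restrict to $V:=X\setminus\op{Sing}(W)$, where $W$ has no critical points: Zariski-locally on $V$ (after trivializing $\mathcal L$) $W$ is part of a regular system of parameters, hence a non-zero-divisor with $\mathcal O_V/(W)$ regular, so by the Eisenbud correspondence the maximal Cohen--Macaulay module attached to $E|_V$ is free and $E|_V$ is locally contractible. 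Therefore $\mathcal{H}om^{\bullet}(E,F)$ has vanishing cohomology sheaves on the open set $V\supseteq Z:=X\setminus U$, so $R\Gamma_Z\big([X/G],\mathcal{H}om^{\bullet}(E,F)\big)=0$ and the restriction map
\[
\mathbb H^{\ast}\!\left([X/G],\mathcal{H}om^{\bullet}(E,F)\right)\;\longrightarrow\;\mathbb H^{\ast}\!\left([U/G],\mathcal{H}om^{\bullet}(i^*E,i^*F)\right)
\]
is an isomorphism — which is exactly full faithfulness of $i^*$.

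The step I expect to be the real obstacle is establishing the $\Hom$-formula above in the equivariant, $\mathcal L$-twisted setting: producing the ``homotopy-injective-relative-to-absolutely-acyclic'' resolutions (or analogous fibrant replacements) that identify morphisms in $\dabs{[X,G,W]}$ with hypercohomology of $\mathcal{H}om^{\bullet}$, and checking compatibility with restriction to $U$. Everything else is either the classical fact that a matrix factorization of a function with nowhere-vanishing differential is contractible, or bookkeeping with coherent sheaves on a Noetherian stack. An equivalent packaging — once one knows from the above that $\dabs{[V,G,W]}=0$ — is to invoke that $i^*$ is a Verdier localization with kernel the factorizations set-theoretically supported on $Z$ \cite{BFK14b}, and then observe that this kernel vanishes because such a factorization restricts to $0$ on $V$ and is recovered from that restriction by pushforward along $V\hookrightarrow X$, which preserves coherence of sheaves supported on the closed set $Z$.
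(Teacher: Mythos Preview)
Your argument is essentially correct, but the paper's proof is considerably shorter and bypasses both the explicit coherent extension and the hypercohomology formula you flag as the main technical burden. The paper observes directly that for any factorization $\mathcal E$ with locally free components, the Leibniz rule gives $dW = d\alpha\circ\beta + \alpha\circ d\beta$, so multiplication by $dW$ is null-homotopic on $\mathcal E$; hence every object is supported on the critical locus. One then looks at the unit $\mathcal E\to i_*i^*\mathcal E$: its cone is supported on $X\setminus U$ on the one hand and on $\op{Sing}(W)\subseteq U$ on the other, so it has empty support and is acyclic. The counit $i^*i_*\to\id$ is an isomorphism for any open immersion, and the equivalence follows.

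The trade-offs: your route separates essential surjectivity from full faithfulness and makes the coherent extension completely explicit, which is nice pedagogically and avoids needing $i_*$ to land in the absolute derived category of \emph{coherent} factorizations. Your support argument via Eisenbud is correct but slightly heavier than necessary (and strictly speaking you should also treat the locus where $W$ is a unit, where contractibility is trivial); the paper's $dW$-homotopy gives support on the critical locus in one line and works uniformly. Most importantly, the paper's unit/counit argument never needs the identification $\Hom_{\dabs{}}(E,F[j])\cong\mathbb H^j\mathcal{H}om^\bullet(E,F)$ that you correctly identify as the delicate foundational input in your approach --- it only needs that a factorization with empty support is absolutely acyclic, which is more elementary. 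Your final ``equivalent packaging'' via Verdier localization is closer in spirit to the paper's argument, and indeed the $dW$-homotopy trick is exactly the clean way to see that the kernel vanishes.
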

\begin{proof}
Consider a matrix factorization $\mathcal E$ with locally-free components $\mathcal E_0, \mathcal E_1$ and maps $\alpha: \mathcal E_0 \to \mathcal E_1, \beta: \mathcal E_1 \to \mathcal E_0 \otimes \mathcal L$ such that $\alpha \circ \beta = \beta \circ \alpha = W$.  Then by the Leibniz rule (i.e. the universal property of K\"ahler differentials), 
\[
dW = d\alpha \circ \beta + \alpha \circ d\beta
\]
i.e. the maps $d\alpha, d\beta$ define a homotopy between the $G$-equivariant morphism of factorizations $dW: \mathcal E \to \mathcal E \otimes \Omega_X$ and $0$.  That is, $\mathcal E$ is annihilated by $dW$.  In summary, since $[X/G]$ has enough locally free sheaves, any factorization is supported on the critical locus of $W$.  

Now for any $\mathcal E$, consider the unit of the adjunction
\[
\mathcal E \to i_*i^*\mathcal E.
\]
The cone of this morphism is, on the one hand, supported on the complement of $U$.  On the other hand, it is supported on the critical locus.  As these do not intersect, the cone has no support.  It follows that the cone is acyclic, or equivalently, the unit of the adjunction is a natural isomorphism.
Conversely, for an open immersion, the counit  $i^* \circ i_* \to \text{Id}$  is always a natural isomorphism.  
\end{proof}

For convenience, we now rewrite Proposition~\ref{Orlov immersion} in our simple algebraic setting. Namely, if $U = \A^n \setminus Z(\mathcal J)\subset X = \A^n \setminus Z(\mathcal I)$, then the containment of the  singular locus $W|_X$  in $U$ is equivalent to the containment of ideals $\mathcal{I} \subseteq \sqrt{\partial W, \mathcal J}$.

\begin{corollary}\label{partial compact toric}
Let $\I$ and $\J$ be two nonzero ideals in $\C[x_1, \ldots, x_n]$ so that $\J \subset \I$. Take $X = \A^n \setminus Z(\I)$ and $U = \A^n \setminus Z(\J)$. Suppose $G$ is a linearly reductive group, the immersion $i:U\hookrightarrow X$ is $G$-equivariant, and $W$ is a $G$-invariant function on $X$. If $\I \subseteq \sqrt{\partial W, \J}$, then 
\[ i^{*}: \dabs{[X, G, W]}\rightarrow \dabs{[U, G, W]}\]
is an equivalence of categories. 
\end{corollary}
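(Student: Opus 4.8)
The plan is to check that the three hypotheses of Proposition~\ref{Orlov immersion} hold for the Landau--Ginzburg data $(X,G,W)$ and the immersion $i \colon U \hookrightarrow X$, and then to quote that proposition directly. The first hypothesis, that $[X/G]$ has enough locally free sheaves, holds because $X = \A^n \setminus Z(\I)$ is quasi-affine and $G$ is linearly reductive: on $[\A^n/G]$ a coherent sheaf is a finitely generated $G$-equivariant $\C[x_1,\ldots,x_n]$-module $M$, and taking the $G$-span of a finite generating set gives a finite-dimensional subrepresentation $V \subseteq M$ with a surjection $\C[x_1,\ldots,x_n] \otimes_\C V \twoheadrightarrow M$ from a free --- hence locally free --- equivariant sheaf; restricting such resolutions along the open immersion $[X/G] \hookrightarrow [\A^n/G]$ gives the resolution property for $[X/G]$. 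The second hypothesis, that $i$ is a $G$-equivariant open immersion, is immediate: $\J \subseteq \I$ forces $Z(\I) \subseteq Z(\J)$, so $U = \A^n \setminus Z(\J)$ is open in $X$, and $G$-equivariance is assumed.

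The only substantive point is the third hypothesis: that the singular locus of $W$ --- equivalently, of $W|_X$, since $X$ is smooth --- is contained in $U$. Writing $W$ as a polynomial, so that $\partial W = (\partial W/\partial x_1, \ldots, \partial W/\partial x_n)$ is its Jacobian ideal, the critical locus of $W|_X$ is the closed subset $Z(\partial W) \cap X$ of $X$. Its containment in $U$ is equivalent to its being disjoint from $Z(\J) \cap X$, i.e. to $Z(\partial W) \cap Z(\J) \cap X = \emptyset$, i.e. to the inclusion of closed subsets $Z(\partial W, \J) \subseteq Z(\I)$ of $\A^n$. By Hilbert's Nullstellensatz (both sides being cut out by ideals in a polynomial ring over the algebraically closed field $\C$), this is equivalent to $\I \subseteq \sqrt{\partial W, \J}$, which is exactly the assumed condition. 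Hence all three hypotheses hold, and Proposition~\ref{Orlov immersion} gives that $i^*$ is an equivalence.

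Essentially all of the content is the dictionary in the second paragraph between the geometric statement ``$\op{Crit}(W|_X) \subseteq U$'' and the algebraic statement $\I \subseteq \sqrt{\partial W, \J}$, which is just the Nullstellensatz applied to the complements $X = \A^n \setminus Z(\I)$ and $Z(\J)$. I expect the only step meriting a reference rather than a one-line argument to be the resolution property for the quotient stack $[X/G]$, so that is where I would take the most care; everything else is formal.
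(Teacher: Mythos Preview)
Your proof is correct and matches the paper's approach exactly: the paper presents this corollary as a direct restatement of Proposition~\ref{Orlov immersion} in the algebraic setting, noting only that the containment of the singular locus of $W|_X$ in $U$ is equivalent to the ideal containment $\I \subseteq \sqrt{\partial W, \J}$. You have simply spelled out this dictionary via the Nullstellensatz and supplied the (omitted) verification that $[X/G]$ has the resolution property, which is more detail than the paper itself provides.
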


\begin{lemma} \label{lem: exc coll}
Let $G$ be an abelian linearly reductive algebraic group lying in an exact sequence
\[
1 \to H \to G \xrightarrow{\chi} \gm \to 1.
\]
Let $S \subseteq \op{Hom}(G, \gm)$ be a set of representatives of the cosets of $\op{Hom}(H, \gm)$.
Then the matrix factorizations 
\[
\{ 0 \substack{\rightarrow\\[-1em] \leftarrow} \C(s) \ | \ s \in S \}
\]
form a full orthogonal (possibly infinite) exceptional collection for $\dabs[\op{Spec}(\C), G, 0]$ where $0$ is a section of $\O(\chi)$.
\end{lemma}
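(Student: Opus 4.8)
The plan is to make the category $\dabs[\op{Spec}(\C), G, 0]$ completely explicit and then read off the three required properties. First I would reduce everything to linear algebra over the point. Since $G$ is abelian and linearly reductive over the characteristic-zero field $\C$, it is diagonalizable, so $\operatorname{coh}[\op{Spec}(\C)/G]$ is the category of finite-dimensional $\widehat{G}$-graded $\C$-vector spaces, where $\widehat{G} := \op{Hom}(G,\gm)$; under this identification $\O(\chi)$ is the one-dimensional space placed in degree $\chi$, and $-\otimes\O(\chi)$ is the shift of grading by $\chi$. Applying the exact anti-equivalence $\op{Hom}(-,\gm)$ to $1 \to H \to G \xrightarrow{\chi} \gm \to 1$ yields a short exact sequence $0 \to \Z\chi \to \widehat{G} \to \widehat{H} \to 0$; in particular $\chi$ has infinite order in $\widehat{G}$, and $S$ is precisely a set of representatives for the $\Z\chi$-cosets of $\widehat{G}$ --- one element in each coset, the cosets being in bijection with $\widehat{H} = \op{Hom}(H,\gm)$.

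Next I would ``unroll'' the factorization category. An object of $\dabs[\op{Spec}(\C), G, 0]$ is a pair of finite-dimensional $\widehat{G}$-graded spaces $\mathcal{E}_0, \mathcal{E}_1$ with grading-preserving maps $\alpha\colon \mathcal{E}_0 \to \mathcal{E}_1$ and $\beta\colon \mathcal{E}_1 \to \mathcal{E}_0 \otimes \O(\chi)$ satisfying $\alpha\beta = \beta\alpha = 0$, since the potential is zero. Grouping the $\widehat{G}$-grading into $\Z\chi$-cosets decomposes any such object as a finite orthogonal direct sum $\bigoplus_{[a] \in \widehat{G}/\Z\chi} \mathcal{E}^{[a]}$, and the datum of $\mathcal{E}^{[a]}$ is exactly that of a bounded complex of finite-dimensional vector spaces
\[
\cdots \to (\mathcal{E}_0)_{a+\chi} \to (\mathcal{E}_1)_{a+\chi} \to (\mathcal{E}_0)_{a} \to (\mathcal{E}_1)_{a} \to (\mathcal{E}_0)_{a-\chi} \to \cdots
\]
(bounded because the total dimension is finite). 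I would check that this identification is exact, carries the factorization shift $[1]$ to the cohomological shift, and carries the canonical equivalence $[2]\simeq -\otimes\O(\chi)$ to the two-step shift; the crucial point is that, because $\chi$ has infinite order, this twist genuinely unrolls the two-periodicity of the factorization category rather than collapsing it, so the result is an equivalence of triangulated categories
\[
\dabs[\op{Spec}(\C), G, 0] \;\simeq\; \bigoplus_{[a]\in \widehat{G}/\Z\chi} \operatorname{D}^{\operatorname{b}}(\operatorname{mod}\,\C).
\]

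Granting this, the conclusion is immediate. The matrix factorization $0 \substack{\rightarrow\\[-1em]\leftarrow} \C(s)$, namely $(\mathcal{E}_0,\mathcal{E}_1) = (\C(s), 0)$, lies entirely in the summand indexed by the coset of $s$, where it is $\C$ in a single cohomological degree; concretely $\C(s+j\chi) \simeq \C(s)[2j]$. Hence objects indexed by distinct $s \in S$ sit in distinct, mutually orthogonal summands, so $\op{Hom}^{\bullet}(\C(s), \C(s')) = 0$ for $s \neq s'$, while inside its own summand each $\C(s)$ is the generator of $\operatorname{D}^{\operatorname{b}}(\operatorname{mod}\,\C)$ with $\op{Hom}^{\bullet}(\C(s), \C(s)) = \C$ in degree $0$; and since $S$ hits every coset, the collection generates. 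One can also see orthogonality and exceptionality directly, without the structural statement: the differential of the factorization $\op{Hom}$-complex over the point vanishes, so $\op{Hom}^{2k}(\C(s),\C(s')) = \op{Hom}_G(\C(s), \C(s'+k\chi))$ and $\op{Hom}^{2k+1}(\C(s),\C(s')) = 0$, and for $s,s'\in S$ the former is nonzero only if $k = 0$ and $s = s'$; but fullness still requires the unrolling.

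The one genuinely delicate point is the second step: checking carefully that the absolute derived category of factorizations of the \emph{nontrivially twisted} zero potential really is an ordinary bounded derived category, with shift, twist, and triangles all matching up. I expect to handle this either by invoking the general results on factorizations with zero potential in \cite{BFK14b, EP15}, or --- since everything lives over a point --- by a direct but slightly fiddly bookkeeping argument whose only real input is, again, that $\chi$ has infinite order, so that no nonzero power of $-\otimes\O(\chi)$ fixes a graded line. Everything else is formal.
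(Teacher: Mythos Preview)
Your proposal is correct. The core ingredients --- Schur's Lemma for the $\op{Hom}$-computation and the identification $[2]\simeq -\otimes\O(\chi)$ for generation --- are exactly those of the paper, and your direct computation of $\op{Hom}^{2k}(\C(s),\C(s'))=\op{Hom}_G(\C(s),\C(s'+k\chi))$ matches the paper's argument line for line.

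The one genuine difference is in how fullness is established. You package everything into a structural equivalence $\dabs[\op{Spec}(\C),G,0]\simeq\bigoplus_{\widehat G/\Z\chi}\operatorname{D}^{\operatorname{b}}(\operatorname{mod}\,\C)$ by unrolling along $\Z\chi$-cosets, and then read off fullness from the fact that $\C$ generates each summand. The paper instead argues generation directly and more tersely: since $[2]=-\otimes\O(\chi)$, the shifts of the $\C(s)$ already exhaust all $\C(\tau)$ for $\tau\in\widehat G$, and then ``by Schur's Lemma, all objects are sums of shifts of these objects.'' Your unrolling is a cleaner justification of exactly that last sentence --- it makes precise why a factorization of the zero potential over a point really is a finite direct sum of shifted simples in the absolute derived category --- whereas the paper treats this as self-evident. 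So your route is slightly longer but more transparent on the one point the paper glosses over; neither approach needs any outside input beyond the infinite order of $\chi$ and semisimplicity of $\op{Rep}(G)$.
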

\begin{proof}
We compute
\[
\op{Hom}(0 \substack{\rightarrow\\[-1em] \leftarrow} \C(s_1), 0 \substack{\rightarrow\\[-1em] \leftarrow} \C(s_2)[i] )
\]
for all $i$.
As these matrix factorizations have projective components, we only need to compute homotopy classes of maps between them.  If $i$ is odd, there are no maps.  If $i = 2j$, 
\begin{align*}
\op{Hom}(0 \substack{\rightarrow\\[-1em] \leftarrow} \C(s_1), 0 \substack{\rightarrow\\[-1em] \leftarrow} \C(s_2)[2j] )& = \op{Hom}( \C(s_1), \C(s_2 +\chi^j) ) & \\
& = \begin{cases} 0 & \text{ if }s_1 \neq s_2 + \chi^j \\
\C & \text{ if }s_1 = s_2 \text{ and } j=0 \end{cases} &\text{ by Schur's Lemma.} 
\end{align*}

To see that this set of objects generates $\dabs[\op{Spec}(\C), G, 0]$, notice that $[2] = - \otimes \O(\chi)$.  Hence, they generate all objects of the form $0 \substack{\rightarrow\\[-1em] \leftarrow} \C(\tau)$ with $ \tau \in \text{Hom}(G, \gm)$.  Since $G$ is abelian, this is all irreducible representations of $G$.
It is easy to see that this new set generates.  Indeed by Schur's Lemma again, all objects are sums of shifts of these objects.
\end{proof}

\subsection{Milnor Numbers}
\begin{definition}
Suppose $w \in \C[x_1, ..., x_n]$  has an isolated singularity.  We define the \textbf{Milnor number} of $w$ by the formula
\[
\mu(w) := \text{dim } \C[x_1, ..., x_n] / \langle \partial_{x_1} w, ..., \partial_{x_n} w \rangle.
\]
\end{definition}

The following lemmas provide a formula for the Milnor number of any invertible polynomial.

\begin{lemma} \label{lem: milnor multiplicative}
Suppose $w \in \C[x_1, ..., x_n]$ and $v \in \C[y_1, ..., y_m]$ have isolated singularities.  Then
\[
\mu(w+v) = \mu(w) \mu(v).
\] 
\end{lemma}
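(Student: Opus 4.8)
The plan is to identify the Milnor (Jacobian) algebra of the Thom--Sebastiani sum $w+v$ with the tensor product over $\C$ of the Milnor algebras of $w$ and $v$, and then to invoke multiplicativity of $\C$-dimension under $\otimes_\C$. The only structural input needed is that $w$ and $v$ are polynomials in disjoint sets of variables, so that their partial derivatives decouple.

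First I would record that, since $v$ does not involve the variables $x_1,\dots,x_n$ and $w$ does not involve $y_1,\dots,y_n$, we have $\partial_{x_i}(w+v) = \partial_{x_i} w$ and $\partial_{y_j}(w+v) = \partial_{y_j} v$ for all $i,j$. Hence the Jacobian ideal of $w+v$ inside $\C[x_1,\dots,x_n,y_1,\dots,y_n]$ is the ideal generated by $\partial_{x_1}w,\dots,\partial_{x_n}w,\partial_{y_1}v,\dots,\partial_{y_n}v$, i.e. the sum of the extensions of the two individual Jacobian ideals. Using the canonical isomorphism $\C[x_1,\dots,x_n,y_1,\dots,y_n]\cong \C[x_1,\dots,x_n]\otimes_\C\C[y_1,\dots,y_n]$, this identification of ideals yields a ring isomorphism
\[
\C[x_1,\dots,x_n,y_1,\dots,y_n]/\langle \partial(w+v)\rangle \;\cong\; \big(\C[x_1,\dots,x_n]/\langle\partial w\rangle\big)\otimes_\C\big(\C[y_1,\dots,y_n]/\langle\partial v\rangle\big).
\]
I would also note in passing that $w+v$ has an isolated singularity, since its critical locus is the product of the critical loci of $w$ and $v$, each of which is $\{0\}$ by hypothesis; equivalently, the right-hand side above is finite-dimensional, being a tensor product of finite-dimensional algebras.

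Finally, taking $\C$-dimensions on both sides and using $\dim_\C(A\otimes_\C B) = (\dim_\C A)(\dim_\C B)$ for finite-dimensional $\C$-vector spaces gives exactly $\mu(w+v) = \mu(w)\,\mu(v)$. There is no serious obstacle here: the argument is a direct unwinding of definitions, and the only points requiring a moment's care are the bookkeeping that the two variable sets are disjoint (so the partials separate) and the observation that the isolated-singularity hypotheses on $w$ and $v$ are precisely what guarantee both tensor factors — and hence their product — are finite-dimensional, which is what makes the dimension count legitimate.
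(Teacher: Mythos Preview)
Your proof is correct and follows essentially the same approach as the paper: identify the Jacobian algebra of $w+v$ with the tensor product of the individual Jacobian algebras via the separation of variables, then use multiplicativity of $\C$-dimension under $\otimes_\C$. The paper's argument is the same computation written as a one-line chain of equalities, without your additional remarks on the critical locus.
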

\begin{proof}
We have
\begin{align*}
\mu(w + v)  & = \text{dim } \C[x_1, ..., x_n, y_1, ..., y_m] / \langle \partial_{x_1} w, ..., \partial_{x_n} w, \partial_{y_1} v, ..., \partial_{y_m} v  \rangle \\
& = \text{dim } \C[x_1, ..., x_n] / \langle \partial_{x_1} w, ..., \partial_{x_n} w \rangle \otimes  \C[y_1, ..., y_m] / \langle \partial_{y_1} v, ..., \partial_{y_m} v  \rangle \\
& = \text{dim } \C[x_1, ..., x_n] / \langle \partial_{x_1} w, ..., \partial_{x_n} w \rangle \text{ dim }  \C[y_1, ..., y_m] / \langle \partial_{y_1} v, ..., \partial_{y_m} v  \rangle \\
& = \mu(w) \mu(v).
\end{align*}
\end{proof}

\begin{theorem}[Milnor-Orlik] \label{thm: milnor num}
If  $w = x_1^{t_1}x_2 + ... + x_n^{t_n}x_1$ is a loop polynomial,
then
\[
\mu(w^T) = \prod_{i=1}^{n} t_i.
\]
If $w = x_1^{t_1}x_2 + ... + x_n^{t_n}$ is a chain polynomial, then
\[
\mu(w^T) = \sum_{k=0}^{n} (-1)^{n-k} \prod_{i=1}^{k} t_{i}.
\]
where the empty product is one, by convention. 
\end{theorem}
\begin{proof}
These formulas can be obtained by plugging the appropriate weights into \cite[Theorem 1]{MO70}.  It can also be obtained from \cite[Theorem 2.10]{HLSW} where they give explicit bases.
\end{proof}

\begin{remark}
As the Milnor number $\mu(w^T)$ is the dimension of the state space of the mirror Landau-Ginzburg model $(\A^n, w^T)$, we expect that, in connection with Conjecture 1.4 of \cite{HO}, the full exceptional collection of the category $\dabs[\A^n, \Gamma_w, w]$ will have length $\mu(w^T)$. We show this in the next section.
\end{remark} 

\subsection{Elementary Geometric Invariant Theory}
Fix an algebraic group $\Gamma$ and a group homomorphism $\Gamma \to \gm^n \subseteq GL_n$ which gives rise to a diagonal action of $\Gamma$ on $\mathbb A^n$. A choice of one-parameter subgroup $\lambda: \gm \to \Gamma$ can be described by a sequence of weights $c_1, ..., c_n$.  We can then define ideals
\begin{align*}
\mathcal I_+ & := \langle x_i \ | \ c_i > 0 \rangle \\
 \mathcal I_- & := \langle x_i \ | \ c_i < 0 \rangle.
\end{align*}
This gives rise to two global quotient stacks which we call the positive and negative \emph{$(\Gamma,\lambda)$-geometric invariant theory (GIT) quotients} respectively
\begin{align*}
X_\pm := [\mathbb A^n \backslash Z(\mathcal I_\pm) / \Gamma].
\end{align*}
\begin{remark}
Notice that in the definition above, the semi-stable loci are obtained strictly from the $\gm$-action induced by $\lambda$.  However, the quotients are by $\Gamma$ as opposed to this $\gm$.  
\end{remark}

\section{Existence of Exceptional Collections}\label{VGIT KS Section}

\subsection{Warm-up: Exceptional Collections for Fermat Polynomials}\label{warmup fermat}

For the sake of completeness, we will show that that $\dabs{[\A^1, \Gamma_w, w]}$ has an exceptional collection for $w=x_1^r$.  This result is well known, quite simple by hand, and is also a consequence of a theorem of Orlov \cite[Corollary 2.9]{Orl09}.  The difference in our approach is that we will use VGIT to obtain the result.  We do this to illustrate that our entire article is a consequence of VGIT for categories of factorizations \cite{BFK12} and the Thom-Sebastiani formula for gauged LG models \cite{BFK14a, BFK14b}.

Consider the polynomial $W = x_2x_1^r$ and define $w_+ := W_2 = x_1^r$ and $w_- := W_1 = x_2$.
  Let $c_2 = r$ and $c_1 =  -1$. The $c_i$ determine a diagonal one-parameter subgroup of $\Gamma_W$ by the map $\lambda: \gm \to \Gamma_W$ under the map $\gamma(t) = (t^{c_2}, t^{c_1}, 1)$. The semistable loci for this one parameter subgroup are
\[
U_+ := \A^2 \setminus Z(x_2); \quad U_- :=\A^2 \setminus Z(x_1).
\]
By Lemma~\ref{lem: stack iso}, we see that $[U_\pm/\Gamma_W] = [\mathbb{A}^1/ \Gamma_{w_\pm}]$. Notice that 
\begin{align*}
\dabs{[\op{Spec}(\C), \Gamma_W/\lambda(\gm), 0]} & \cong \dbcoh{[\op{Spec}(\C)]} & \text{ by  \cite[Corollary 2.3.12]{BFK12}} \\
& \cong \langle E \rangle & \text{ where }E \text{ is the exceptional object }\C\\
\end{align*}
Hence,  
\begin{align*}
\dabs{[\A^1, \Gamma_{x_1^r}, x_1^r]} & \cong \langle E_1, \ldots, E_{r-1}, \dabs{[\A^1, \Gamma_{x_2}, x_2]}\rangle  & \text{by \cite[Theorem 3.5.2 (a)]{BFK12} }\\
& \cong \langle E_1, \ldots, E_{r-1} \rangle  & \text{since }x_2\text{ has no critical locus}\\
\end{align*}

\subsection{Exceptional Collections for Loop Polynomials}\label{section loop to chain}
For any natural numbers $a_i, b \geq 2$, consider the polynomial
\begin{equation} \label{eq: Wloopchain}
W := x_1^{a_1} x_2 + x_2^{a_2}x_3 + \ldots + x_{n-1}^{a_{n-1}}x_n + x_n^{a_n}x_1x_{n+1}^{b}.
\end{equation}
Then, 
\begin{equation} \label{eq: loop+}
w_+ := W_{n+1} = x_1^{a_1} x_2 + x_2^{a_2}x_3 + \ldots + x_{n-1}^{a_{n-1}}x_n + x_n^{a_n}x_1
\end{equation}
is a loop polynomial
and
\begin{equation} \label{eq: loop-}
w_- := W_n = x_1^{a_1} x_2 + x_2^{a_2}x_3 + \ldots + x_{n-1}^{a_{n-1}} + x_1x_{n+1}^{b}
\end{equation}
is a chain polynomial.  In this section we will show that the derived categories of the gauged Landau-Ginzburg models associated to $w_+, w_-$ differ by an exceptional collection.

Let $(-1)^{i+n+1} d_i$ be the determinant of the $i^{th}$ maximal minor of the matrix $A_W$ and
\[
c_i := \frac{d_i}{gcd(d_{1}, ..., d_{n+1})}.
\]
  Explicitly in this case, 
\begin{equation}\begin{aligned}\label{pi equation for loop}
d_1 &= (-1)^{n} b; \\
d_j &= (-1)^{j+n+1} b\prod_{i=1}^{j-1} a_{i} \text{ for $2 \leq j\leq n$}; \text{ and} \\
d_{n+1} &=  a_1 \cdots a_n + (-1)^{n+1}.
\end{aligned}\end{equation}
It is easy to check that the $c_i$ determine a diagonal one-parameter subgroup
\begin{align*}
\lambda: \gm & \to \Gamma_W \\
t & \mapsto (t^{c_1}, ..., t^{c_{n+1}}, 1).
\end{align*}

We define
\[
U_+ := \A^{n+1} \backslash Z(x_{n+1}), U_- := \A^{n+1} \backslash Z(x_n).
\]

\begin{remark} \label{rem: ci}
The $c_i$ are the unique (up to sign) relatively prime weights of the $x_i$ such that $W$ is homogeneous of degree zero.  We fix our sign convention so that $c_{n+1}$ is positive and $c_n$ is negative.  This ensures that $\A^{n+1} \backslash Z(\mathcal I_\pm) \supseteq U_\pm$.
\end{remark}

\begin{lemma}
\label{lem: critical localize}
There are equivalences of categories
\[
\dabs[X_\pm, W] \cong \dabs[U_\pm, \Gamma_W, W].
\]

\end{lemma}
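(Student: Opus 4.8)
The plan is to apply Corollary~\ref{partial compact toric} (Orlov's restriction equivalence) twice, once for the $+$ side and once for the $-$ side, using the ideals $\I_\pm$ coming from the one-parameter subgroup $\lambda$ and the larger ideals $\J_\pm := \langle x_{n+1}\rangle$, $\langle x_n\rangle$ cutting out the unstable loci $Z(x_{n+1})$, $Z(x_n)$. Concretely, for the $+$ side I would set $X = X_+ = [\A^{n+1}\backslash Z(\I_+)/\Gamma_W]$ and $U = [U_+/\Gamma_W] = [\A^{n+1}\backslash Z(x_{n+1})/\Gamma_W]$, and symmetrically on the $-$ side; the equivalence then reads $i^*\colon \dabs[X_\pm, W]\to \dabs[U_\pm, \Gamma_W, W]$.

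The three hypotheses of Corollary~\ref{partial compact toric} that must be checked are: (i) $\J_\pm \subseteq \I_\pm$, i.e. that $x_{n+1}$ (resp.\ $x_n$) lies in the irrelevant ideal $\I_+$ (resp.\ $\I_-$); (ii) that $i$ is $\Gamma_W$-equivariant, which is automatic since $Z(x_{n+1})$ and $Z(x_n)$ are $\Gamma_W$-stable subvarieties (each is a coordinate hyperplane and $\Gamma_W$ acts diagonally); and (iii) the containment of singular loci $\I_\pm \subseteq \sqrt{\partial W, \J_\pm}$. For (i), by Remark~\ref{rem: ci} the sign convention forces $c_{n+1} > 0$ and $c_n < 0$, so $x_{n+1} \in \I_+ = \langle x_i \mid c_i > 0\rangle$ and $x_n \in \I_- = \langle x_i \mid c_i < 0\rangle$; this is exactly the content of the last sentence of that remark. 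The substantive point is (iii): I would compute $\partial W = \langle \partial_{x_1}W, \ldots, \partial_{x_{n+1}}W\rangle$ and show that modulo $x_{n+1}$ (for the $+$ side) the partials already generate an ideal whose radical contains every $x_i$ with $c_i > 0$, and similarly modulo $x_n$ for the $-$ side. Since $W$ is quasi-smooth after restricting $x_{n+1}=1$ or $x_n=1$ (these restrictions are $w_+$, resp.\ $w_-$, which are invertible polynomials and hence have isolated critical points), the Jacobian ideal of $W$ together with the relevant coordinate function cuts out a set contained in the origin locus, which in the relevant chart meets $\A^{n+1}\backslash Z(\I_\pm)$ in the empty set; spelling this out via the explicit monomials of $W$ in \eqref{eq: Wloopchain} is the only real computation.

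The main obstacle I anticipate is bookkeeping in step (iii): one must verify that the critical locus of $W$ on $\A^{n+1}$ is genuinely contained in $U_\pm$, not merely that $w_\pm$ has an isolated singularity. Because $W$ has the extra variable $x_{n+1}$ appearing only in the monomial $x_n^{a_n}x_1 x_{n+1}^b$, the partial $\partial_{x_{n+1}}W = b\, x_n^{a_n} x_1 x_{n+1}^{b-1}$ vanishes on $Z(x_{n+1})$, so one cannot naively "solve" there; instead one argues that on the open set $\A^{n+1}\backslash Z(\I_+)$ some $x_i$ with $c_i>0$ is invertible, and then the chain of partials $\partial_{x_1}W, \ldots, \partial_{x_{n-1}}W$ forces all coordinates to vanish, contradicting invertibility — so the critical locus misses $\A^{n+1}\backslash Z(\I_+)\cap Z(x_{n+1})$, which is precisely the containment needed. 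This is the standard quasi-smoothness argument for chain/loop polynomials, applied one monomial at a time. Once all three hypotheses are in place, Corollary~\ref{partial compact toric} delivers both equivalences and the lemma follows.
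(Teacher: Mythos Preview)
Your proposal is correct and follows essentially the same route as the paper: apply Corollary~\ref{partial compact toric} to the $\Gamma_W$-equivariant open immersions $U_\pm \hookrightarrow \A^{n+1}\setminus Z(\I_\pm)$, with hypothesis (i) supplied by Remark~\ref{rem: ci} and hypothesis (iii) checked by chasing the partial derivatives of $W$. The paper's execution of (iii) is slightly more streamlined than your sketch: it works purely ideal-theoretically and starts the descending induction from $\partial_{x_n}W = x_{n-1}^{a_{n-1}} + a_n x_1 x_n^{a_n-1}x_{n+1}^b$ (rather than from $\partial_{x_1}W,\ldots,\partial_{x_{n-1}}W$ as you indicate), which immediately gives $x_{n-1}\in\sqrt{\partial W, x_{n+1}}$ and $x_{n-1}\in\sqrt{\partial W, x_n}$; then $\partial_{x_i}W = x_{i-1}^{a_{i-1}} + a_i x_i^{a_i-1}x_{i+1}$ for $1<i<n$ propagates this down to all $x_1,\ldots,x_{n-1}$, which is more than enough to contain $\I_\pm$.
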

\begin{proof}

Since $Z(x_{n+1}), Z(x_n)$ are $\Gamma_W$ invariant, the open immersions
\[
i_\pm: U_\pm \hookrightarrow \A^{n+1} \backslash Z(\mathcal I_\pm)
\]
are $\Gamma_W$-equivariant.
Hence, by Corollary~\ref{partial compact toric}, the statement of the lemma reduces to proving the containments 
 \[
 \I_+ \subseteq \sqrt{\partial W, x_{n+1}} \ \  \text{ and } \ \ \I_- \subseteq \sqrt{\partial W, x_n}.
 \]  
 From the partial derivative $\partial_{x_n}W = x_{n-1}^{a_{n-1}} + a_nx_1x_n^{a_n-1}x_{n+1}^{b}$, we see that $x_{n-1} \in \sqrt{\partial W, x_{n+1}}$  (respectively $\sqrt{\partial W, x_n}$). For $1 < i < n$, we compute $\partial_{x_{i}}W = x_{i-1}^{a_{i-1}} + a_i x_i^{a_i-1}x_{i+1}$. Hence, if $x_i \in \sqrt{\partial W, x_{n+1}}$ (respectively $\sqrt{\partial W, x_n} $) then $x_{i-1} \in \sqrt{\partial W, x_{n+1}}$ (respectively $\sqrt{\partial W, x_n}$). Both containments follow from descending induction. 
\end{proof}

\begin{lemma} \label{lem: milnor number}
The following identity holds.
\[
\mu(w_+^T) - \mu(w_-^T) = \sum d_i
\]
\end{lemma}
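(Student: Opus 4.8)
The plan is a direct computation combining the Milnor-number formulas of Lemmas~\ref{lem: milnor loop} and \ref{lem: milnor chain} with the explicit values of the $d_i$ in \eqref{pi equation for loop}. First I would pin down the two polynomials. By construction $w_+ = W_{n+1}$ is the loop polynomial $x_1^{a_1}x_2 + \cdots + x_{n-1}^{a_{n-1}}x_n + x_n^{a_n}x_1$ in $n$ variables, so Lemma~\ref{lem: milnor loop} gives $\mu(w_+^T) = \prod_{i=1}^n a_i$. Setting $x_n = 1$ turns $W$ into $w_- = W_n = x_1^{a_1}x_2 + \cdots + x_{n-1}^{a_{n-1}} + x_1 x_{n+1}^b$, and relabelling the variables in the cyclic order $x_{n+1}, x_1, \dots, x_{n-1}$ displays $w_-$ as the chain polynomial $y_1^{b}y_2 + y_2^{a_1}y_3 + \cdots + y_{n-1}^{a_{n-2}}y_n + y_n^{a_{n-1}}$, i.e.\ a chain in $n$ variables with exponent vector $(t_1,\dots,t_n) = (b, a_1, \dots, a_{n-1})$, to which Lemma~\ref{lem: milnor chain} applies.

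The key step is to collapse the two cases of Lemma~\ref{lem: milnor chain} into one closed form. Setting $P_m := \prod_{i=1}^{m} t_i$ (so $P_0 = 1$), the elementary identity $(t_j - 1)P_{j-1} = P_j - P_{j-1}$ rewrites every summand in Lemma~\ref{lem: milnor chain} as a difference of consecutive $P_m$'s; collecting terms, the even case becomes $P_n - \sum_{k}(P_{2k-1}-P_{2k-2})$ and the odd case becomes $-1 + P_n - \sum_{k}(P_{2k}-P_{2k-1})$, and a short telescoping check shows that both equal $\sum_{m=0}^{n} (-1)^{n-m} P_m$ (the stray $-1$ in the odd case being exactly the $m=0$ term, since $(-1)^nP_0 = -1$ there). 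Substituting $(t_1,\dots,t_n) = (b,a_1,\dots,a_{n-1})$ gives $P_m = b\prod_{i=1}^{m-1} a_i$ for $1\le m\le n$, hence
\[
\mu(w_-^T) = (-1)^n + b\sum_{m=1}^{n}(-1)^{n-m}\prod_{i=1}^{m-1} a_i.
\]

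Finally I would compute the right-hand side of the claimed identity. The formula $d_j = (-1)^{j+n+1} b\prod_{i=1}^{j-1}a_i$ of \eqref{pi equation for loop} in fact holds for all $1\le j\le n$ (the empty product giving $d_1 = (-1)^n b$), so
\[
\sum_{i=1}^{n+1} d_i = \prod_{i=1}^n a_i + (-1)^{n+1} + b\sum_{j=1}^{n} (-1)^{j+n+1}\prod_{i=1}^{j-1} a_i.
\]
Since $(-1)^{j+n+1} = -(-1)^{n-j}$ and $-(-1)^n = (-1)^{n+1}$, the displayed expression matches term by term with $\prod_{i=1}^n a_i - \mu(w_-^T) = \mu(w_+^T) - \mu(w_-^T)$, completing the proof. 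The only place that needs care is the sign and index bookkeeping: the parity convention in \eqref{pi equation for loop}, the even/odd split of Lemma~\ref{lem: milnor chain}, and the cyclic relabelling of the variables of $w_-$ must all be tracked consistently; everything else is routine algebra. A quick numerical check (for instance $n=2$, $a_1=a_2=b=2$, where $\mu(w_+^T)-\mu(w_-^T)$, $\sum_i d_i$, and the claimed value all equal $1$) is a useful guard against sign slips.
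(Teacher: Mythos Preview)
Your proposal is correct and is exactly the ``simple calculation plugging in the Milnor numbers carefully from Lemmas~\ref{lem: milnor loop} and \ref{lem: milnor chain}'' that the paper invokes but does not spell out. In particular, your relabelling of $w_-$ as a chain with exponent vector $(b,a_1,\dots,a_{n-1})$, your uniform closed form $\mu(w_-^T)=\sum_{m=0}^n(-1)^{n-m}P_m$, and your sign bookkeeping for the $d_i$ are all sound.
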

\begin{proof}
This is a simple calculation plugging in the Milnor numbers carefully from Theorem~\ref{thm: milnor num}.
\end{proof}

\begin{theorem}\label{compare chain and loop}
Take the polynomials 
\[
w_+ = W_{n+1} = x_1^{a_1} x_2 + x_2^{a_2}x_3 + \ldots + x_{n-1}^{a_{n-1}}x_n + x_n^{a_n}x_1
\]
and
\[
w_- = W_n = x_1^{a_1} x_2 + x_2^{a_2}x_3 + \ldots + x_{n-1}^{a_{n-1}} + x_1x_{n+1}^{b}
\]
for $a_i \geq 2$ and $b\geq 2$.

The following statements hold:
\begin{enumerate}[label=(\alph*)]

\item If $\mu(w_-^T) > \mu(w_+^T)$, then we have a semi-orthogonal decomposition
\[
\dabs[ \A^n, \Gamma_{w_-}, w_-] \cong \langle E_1, \ldots, E_{\mu(w_-^T) - \mu(w_+^T)}, \dabs[ \A^n, \Gamma_{w_+}, w_+]\rangle
\]
where each  $E_j$ is an exceptional object.
\item If $\mu(w_+^T) = \mu(w_-^T)$, then we have the equivalence
\[
\dabs[ \A^n, \Gamma_{w_+}, w_+] \cong \dabs[ \A^n, \Gamma_{w_-}, w_-].
\] 
\item If $\mu(w_+^T) > \mu(w_-^T)$, then we have a semi-orthogonal decomposition
\[
\dabs[ \A^n, \Gamma_{w_+}, w_+] \cong \langle E_1, \ldots, E_{\mu(w_+^T) - \mu(w_-^T)}, \dabs[ \A^n, \Gamma_{w_-}, w_-]\rangle
\]
where each  $E_j$ is an exceptional object.
\end{enumerate}
\end{theorem}

\begin{proof}
We have a sequence of equivalences using Lemmas~\ref{lem: stack iso} and~\ref{lem: critical localize}:
\[\dabs[ \A^n, \Gamma_{w_+}, w_+]   \cong \dabs[ U_+, \Gamma_W, W]  \cong \dabs[ X_+, W];\]
\[\dabs[ \A^n, \Gamma_{w_-}, w_-]   \cong \dabs[ U_-, \Gamma_W, W]  \cong \dabs[ X_-, W].\]
We then apply \cite[Theorem 3.5.2]{BFK12} to get
\begin{enumerate}[label=(\alph*)]
\item If $\sum_i c_i < 0$, then we have a semi-orthogonal decomposition
\[
\dabs[ X_-, W] \cong \langle E_1, \ldots, E_t, \dabs[X_+, W]\rangle,\]
\item If $\sum_i c_i = 0$ then we have the equivalence 
\[\dabs[ X_-, W] \cong \dabs[X_+, W], \text{ and }\]
\item If $\sum_i c_i >0$, then we have a semi-orthogonal decomposition
\[
\dabs[ X_+, W] \cong \langle E_1, \ldots, E_t, \dabs[X_-, W]\rangle,\]
\end{enumerate}
where each $E_j$ is an exceptional object (explained below).  These correspond to the cases of the theorem by Lemma~\ref{lem: milnor number}.

To clarify the appearance of exceptional objects, notice that all the $c_i$ are non-zero.  Hence, the fixed locus of $\lambda$ is just the origin.  Let $\overline{\chi_{n+1}}$ be the character of $\Gamma_W / \lambda$ induced by $\chi_{n+1}$.  By \cite[Remark 4.2.3]{BFK12} the orthogonal components are all equivalent to $\dabs[\op{Spec}(\C), \Gamma_W /\lambda, 0]$ where $0$ is a section of $\O(\overline{\chi_{n+1}})$.  This category has an exceptional collection by Lemma~\ref{lem: exc coll} of length $|\op{ker} \overline{\chi_{n+1}}|$.

Now, let us calculate $t$.  In the statement of \cite[Theorem 3.5.2]{BFK12}, the category $\dabs[\op{Spec}(\C), \Gamma_W /\lambda, 0]$ occurs $|\sum c_i|$ times.  Hence $t = |\op{ker} \overline{\chi_{n+1}}| |\sum {c_i}|$.
By the snake lemma, $\text{Hom}(\op{ker} \overline{\chi_{n+1}}, \gm)$ is isomorphic to the torsion subgroup of the cokernel of $A^T_W$.
Since the $d_i$ are the determinants of the maximal minors of this matrix, $|\op{ker} \overline{\chi_{n+1}}| = \text{gcd}(d_1, ..., d_{n+1})$.  
   Hence, $t = |\op{ker} \overline{\chi_{n+1}}| |\sum {c_i}| = |\sum d_i|$ which equals $|\mu(w_+^T) - \mu(w_-^T)|$ by Lemma~\ref{lem: milnor number}.
\end{proof}

We now compute the difference of the Milnor numbers to apply Theorem~\ref{compare chain and loop}.

\begin{lemma}\label{sign of sum of cis}
If $b \leq a_n$, then $\mu(w_+^T) - \mu(w_-^T) > 0$. 
\end{lemma}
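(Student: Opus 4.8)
The plan is to reduce the inequality to a finite arithmetic computation via Lemma~\ref{lem: milnor number}, which asserts $\mu(w_+^T) - \mu(w_-^T) = \sum_{i=1}^{n+1} d_i$. Thus it suffices to prove $\sum_{i=1}^{n+1} d_i > 0$ whenever $b \le a_n$, and for this I would simply substitute the explicit values of the $d_i$ recorded in~\eqref{pi equation for loop} and simplify.

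Set $P_j := a_1\cdots a_j$ (so $P_0 = 1$). First observe that the formula $d_j = (-1)^{j+n+1} b P_{j-1}$ stated in~\eqref{pi equation for loop} for $2 \le j \le n$ also returns the correct value $d_1 = (-1)^n b$ at $j = 1$, so that $\sum_{j=1}^{n} d_j = b\sum_{j=1}^{n}(-1)^{j+n+1} P_{j-1}$. Adding $d_{n+1} = P_n + (-1)^{n+1}$ and reindexing the alternating sum from the top index down, one obtains
\[
\sum_{i=1}^{n+1} d_i \;=\; \bigl(P_n - b P_{n-1}\bigr) \;+\; b\bigl(P_{n-2} - P_{n-3} + P_{n-4} - \cdots \pm P_0\bigr) \;+\; (-1)^{n+1},
\]
the alternating tail running down through $P_{n-2}, \ldots, P_0$.

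The remainder is an elementary sign analysis. Since $b \le a_n$ and each $a_i \ge 2$, the leading term is $P_n - bP_{n-1} = P_{n-1}(a_n - b) \ge 0$. Grouping the tail into consecutive pairs produces summands of the form $b P_{2k-1}(a_{2k}-1) > 0$, together with one leftover term equal to $+b$ exactly when $n$ is even (when $n$ is odd the pairing is exact and the tail is a sum of nonnegative terms). Finally $(-1)^{n+1} = +1$ for $n$ odd — so the whole sum is then visibly $\ge 1$ — while $(-1)^{n+1} = -1$ for $n$ even, in which case it is absorbed by the leftover $+b \ge 2$; either way $\sum_{i=1}^{n+1} d_i \ge 1 > 0$, as claimed. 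The one place requiring care is the parity bookkeeping — correctly identifying the leftover term of the alternating tail and matching it against the sign of $(-1)^{n+1}$ — and I expect that to be essentially the only genuine obstacle, the positivity being immediate once the terms are organized as above. (One could equally bypass Lemma~\ref{lem: milnor number} by rewriting $w_-$ as a chain polynomial with exponents $(b, a_1, \ldots, a_{n-1})$ in the reordered variables $x_{n+1}, x_1, \ldots, x_{n-1}$ and applying Lemmas~\ref{lem: milnor loop} and~\ref{lem: milnor chain} directly; the resulting computation is the same.)
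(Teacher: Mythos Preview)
Your proposal is correct and follows essentially the same approach as the paper: reduce via Lemma~\ref{lem: milnor number} to the positivity of $\sum d_i$, substitute the explicit minors from~\eqref{pi equation for loop}, bound $P_n - bP_{n-1} \ge 0$ using $b \le a_n$, and pair the remaining alternating tail into strictly positive blocks $P_{j-1}(a_j - 1)$, with the leftover $\pm 1$ and (for $n$ even) the extra $+b$ handling the parity. The paper splits into the $n$ even and $n$ odd cases from the outset, while you organize it uniformly and defer the parity bookkeeping to the end, but the arithmetic is identical. One cosmetic point: the precise index pattern of the paired terms depends on the parity of $n$ (for $n$ odd the pairs are $P_{2k-2}(a_{2k-1}-1)$ rather than $P_{2k-1}(a_{2k}-1)$), though of course this does not affect positivity.
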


\begin{proof}
By Lemma~\ref{lem: milnor number}, it is equivalent to prove that the sum of the $d_i$ is positive. If $n$ is even, then, since $a_k \geq 2$ for all $k$, we have

\begin{equation}\begin{aligned}
\sum_{i=1}^{n+1} d_i &=(a_1\cdots a_n-1) + b+\left(\sum_{j=1}^{n} (-1)^{j+1} b\prod_{i=1}^{j-1} a_{i}\right) - a_1\cdots a_{n-1}b\\ 
	&\geq (b - 1) +\left(\sum_{j=1}^{n} (-1)^{j+1} b\prod_{i=1}^{j-1} a_{i}\right) \\
	&= (b-1) +  \sum_{k=1}^{n/2 -1} (a_{2k}-1) a_{1}\cdots a_{2k-1}b \\
	&> 0.
\end{aligned}\end{equation}

If $n$ is odd, then we have 

\begin{equation}\begin{aligned}
\sum_{i=1}^{n+1} d_i &=(a_1\cdots a_n+1) + -b + \left(\sum_{j=1}^{n} (-1)^{j} b\prod_{i=1}^{j-1} a_{i}\right) - a_1\cdots a_{n-1}b\\ 
	&\geq 1-b+\left(\sum_{j=1}^{n} (-1)^{j} b\prod_{i=1}^{j-1} a_{i}\right) \\
	&=  1+ b(a_1-1) + \sum_{k=2}^{(n-1)/2} (a_{2k-1}-1) a_{1}\cdots a_{2k-2}b \\
	&> 0.
\end{aligned}\end{equation}
\end{proof}

\begin{corollary} \label{cor: comparison}
If  $b \leq a_n$ and $\dabs[ \A^n , \Gamma_{w_-}, w_-]$ has a full exceptional collection of length $\mu(w_-^T)$, then $\dabs[ \A^n, \Gamma_{w_+}, w_+]$ has a full exceptional collection of length $\mu(w_+^T)$.
\end{corollary}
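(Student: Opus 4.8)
The plan is to chain together the two main structural results already available: Theorem~\ref{compare chain and loop}, which compares $\dabs[\A^n, \Gamma_{w_+}, w_+]$ and $\dabs[\A^n, \Gamma_{w_-}, w_-]$ via a semi-orthogonal decomposition, and Lemma~\ref{sign of sum of cis}, which pins down the direction of that decomposition under the hypothesis $b \leq a_n$. Concretely, Lemma~\ref{sign of sum of cis} tells us that $\mu(w_+^T) - \mu(w_-^T) > 0$, so we are in case (c) of Theorem~\ref{compare chain and loop}: there is a semi-orthogonal decomposition
\[
\dabs[\A^n, \Gamma_{w_+}, w_+] \cong \langle E_1, \ldots, E_{\mu(w_+^T) - \mu(w_-^T)}, \dabs[\A^n, \Gamma_{w_-}, w_-] \rangle
\]
with each $E_j$ exceptional.

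Next I would feed in the hypothesis that $\dabs[\A^n, \Gamma_{w_-}, w_-]$ already carries a full exceptional collection, say $F_1, \ldots, F_{\mu(w_-^T)}$, of the stated length. The standard fact about semi-orthogonal decompositions is that if the right-hand component admits a full exceptional collection and the left-hand pieces are exceptional objects, then concatenating produces a full exceptional collection of the ambient category: one needs to check that $\op{Hom}^\bullet(E_i, E_j) = 0$ for $i > j$ (true since the $E_i$ form an exceptional collection as in the proof of Theorem~\ref{compare chain and loop}, coming from the simple representations in Lemma~\ref{lem: exc coll}), that $\op{Hom}^\bullet(F_i, F_j) = 0$ for $i > j$ (the given full exceptional collection), and that $\op{Hom}^\bullet(F_i, E_j) = 0$ for all $i, j$ (this is exactly the semi-orthogonality $\op{Hom}^\bullet(\dabs[\A^n, \Gamma_{w_-}, w_-], E_j) = 0$ built into the decomposition). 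Fullness is automatic: the $E_j$ together with $\dabs[\A^n, \Gamma_{w_-}, w_-]$ generate, and the $F_i$ generate the latter.

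Finally I would count lengths: the resulting collection has length $(\mu(w_+^T) - \mu(w_-^T)) + \mu(w_-^T) = \mu(w_+^T)$, which is exactly the claim.

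I do not expect any serious obstacle here — this corollary is a formal consequence of the preceding two results plus the elementary bookkeeping that a semi-orthogonal decomposition into a full exceptional collection and finitely many exceptional objects yields a full exceptional collection. The only point requiring a word of care is making sure the exceptional objects $E_j$ appear on the correct (left) side so that the semi-orthogonality hypotheses line up with the direction recorded in case (c); this is precisely what Lemma~\ref{sign of sum of cis} guarantees under $b \leq a_n$. If anything, the mild subtlety is that one should remark that the $E_j$ are mutually exceptional in the right order, which follows from the description of the orthogonal components as $\dabs[\op{Spec}(\C), \Gamma_W/\lambda, 0]$ together with Lemma~\ref{lem: exc coll}, rather than re-deriving it.
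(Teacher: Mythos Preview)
Your proposal is correct and follows exactly the paper's approach: invoke Lemma~\ref{sign of sum of cis} to land in case (c) of Theorem~\ref{compare chain and loop}, then concatenate the resulting exceptional objects with the assumed full exceptional collection of $\dabs[\A^n,\Gamma_{w_-},w_-]$ and count lengths. The paper compresses all of this into ``the result follows immediately,'' but what you have written is precisely the unpacking of that sentence.
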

\begin{proof}
By Lemmas~\ref{lem: milnor number} and \ref{sign of sum of cis}, we can apply Theorem~\ref{compare chain and loop}(c). 
The result follows immediately.
\end{proof}

\subsection{Exceptional Collections for Chain Polynomials} \label{sec: chain to smaller chain}
In this subsection, we argue that the derived category of a chain polynomial admits a full exceptional collection.  We omit most of the details as the proof is nearly identical to the one appearing in the previous section.  Moreover, this result already appeared recently \cite[Corollary 1.6]{HO}.  Nevertheless, we provide the reader with the appropriate changes for a self-contained treatment of the entire result using just VGIT and the Thom-Sebastiani formula for gauged LG models.

For any $b\geq 2$, consider the polynomial
\begin{equation} \label{eq: Wchainchain}
W := x_1^{a_1} x_2 + x_2^{a_2}x_3 + \ldots + x_{n-1}^{a_{n-1}}x_n + x_n^{a_n}x_{n+1}^{b}.
\end{equation}
Then
\begin{equation} \label{eq: chain+}
w_+ := W_{n+1} = x_1^{a_1} x_2 + x_2^{a_2}x_3 + \ldots + x_{n-1}^{a_{n-1}}x_n + x_n^{a_n}
\end{equation}
is a chain polynomial of length $n$
and
\begin{equation}  \label{eq: chain-}
w_- := W_{n} = x_1^{a_1} x_2 + x_2^{a_2}x_3 + \ldots + x_{n-1}^{a_{n-1}} + x_{n+1}^{b}.
\end{equation}
is a Thom-Sebastiani sum of a chain polynomial of length $n-1$ and a Fermat polynomial. 

Again, we consider the diagonal one-parameter subgroup of $\Gamma_W$ defined as the image of the map
\begin{align*}
\lambda: \gm & \to \Gamma_W \\
t & \mapsto (t^{c_1}, ..., t^{c_{n+1}}, 1)
\end{align*}
where, again,  the $(-1)^{i+n + 1}c_i$ are the determinants of the full rank minors of $A_W$ divided by their greatest common divisor.  Explicitly,
\begin{equation}\begin{aligned}\label{pi equation for chain}
d_j &= (-1)^{n+j-1} b\prod_{i=1}^{j-1} a_i, \text{ for $1 \leq j\leq n$}, \\
d_{n+1} &=  a_1\cdots a_n, \\
c_j & = \frac{d_j}{gcd(d_1, ..., d_{n+1})}.
\end{aligned}\end{equation}

We define
\begin{align*}
U_+ &:= \A^{n+1} \backslash Z(x_{n+1}) \ \ \text{ and } \ \ U_- := \A^{n+1} \backslash Z(x_n), \\
\mathcal I_+  &= \langle x_{n+1}, x_j \ | \ j \not \equiv n \pmod 2\rangle, \text{ and}\\
\mathcal I_- &= \langle x_j \ | \ j \equiv n \pmod 2\rangle.
\end{align*}

\begin{lemma}\label{lem: critical localize chain}
There are equivalences of categories
\[
\dabs[X_\pm, W] \cong \dabs[U_\pm, \Gamma_W, W].
\]
\end{lemma}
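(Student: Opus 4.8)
The plan is to imitate the proof of Lemma~\ref{lem: critical localize} verbatim, with the only substantive change being the explicit computation of partial derivatives for the chain polynomial $W = x_1^{a_1}x_2 + \cdots + x_{n-1}^{a_{n-1}}x_n + x_n^{a_n}x_{n+1}^b$ in \eqref{eq: Wchainchain}. First I would observe that $Z(x_{n+1})$ and $Z(x_n)$ are $\Gamma_W$-invariant closed subsets, so the open immersions $i_\pm \colon U_\pm \hookrightarrow \A^{n+1}\setminus Z(\I_\pm)$ are $\Gamma_W$-equivariant; here one uses Remark~\ref{rem: ci}'s sign convention (adapted to this $W$) to ensure $\A^{n+1}\setminus Z(\I_\pm) \subseteq U_\pm$, i.e.\ that $x_{n+1} \in \I_+$ and $x_n \in \I_-$, which is immediate from the displayed formulas for $\I_\pm$. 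Then $\Gamma_W$ is linearly reductive (it is abelian), so by Corollary~\ref{partial compact toric} it suffices to verify the ideal containments
\[
\I_+ \subseteq \sqrt{\partial W, x_{n+1}} \quad\text{and}\quad \I_- \subseteq \sqrt{\partial W, x_n}.
\]

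For the containments, I would argue by descending induction exactly as before. The relevant partial derivatives are $\partial_{x_{n+1}} W = b\, x_n^{a_n} x_{n+1}^{b-1}$, $\partial_{x_n} W = x_{n-1}^{a_{n-1}} + a_n x_n^{a_n-1} x_{n+1}^b$, and for $1 < i < n$, $\partial_{x_i} W = x_{i-1}^{a_{i-1}} + a_i x_i^{a_i-1} x_{i+1}$, together with $\partial_{x_1} W = a_1 x_1^{a_1-1} x_2$. Working modulo $x_{n+1}$: from $\partial_{x_n} W$ we get $x_{n-1}^{a_{n-1}} \in \sqrt{\partial W, x_{n+1}}$, hence $x_{n-1} \in \sqrt{\partial W, x_{n+1}}$; and whenever $x_i \in \sqrt{\partial W, x_{n+1}}$ for $1 < i \le n-1$, the relation $\partial_{x_i} W = x_{i-1}^{a_{i-1}} + a_i x_i^{a_i-1} x_{i+1}$ forces $x_{i-1} \in \sqrt{\partial W, x_{n+1}}$. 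So $x_1, \ldots, x_{n-1}, x_{n+1}$ all lie in $\sqrt{\partial W, x_{n+1}}$, which contains $\I_+ = \langle x_{n+1}, x_j \mid j \not\equiv n \pmod 2\rangle$ (the generators $x_j$ with $j \not\equiv n$ all satisfy $j \le n-1$ or $j = n+1$, and when $j=n+1$ it is already a generator). The same induction run modulo $x_n$ shows $x_1, \ldots, x_{n-1} \in \sqrt{\partial W, x_n}$, so $\I_- = \langle x_j \mid j \equiv n \pmod 2\rangle \subseteq \sqrt{\partial W, x_n}$ since every such generator $x_j$ with $j \equiv n$ has $j \le n$, and $x_n$ is adjoined.

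I do not expect a genuine obstacle here; the content is entirely parallel to Lemma~\ref{lem: critical localize}. The only point requiring a moment's care is bookkeeping the parity condition: one must check that every generator of $\I_\pm$ as defined just before the lemma statement actually appears among $\{x_1, \ldots, x_{n-1}, x_n, x_{n+1}\}$ with the right index range, so that the descending induction reaches it. Concretely, $\I_+$ is generated by $x_{n+1}$ and those $x_j$ with $j$ of opposite parity to $n$ — all such $j$ are $\le n-1$ — and $\I_-$ is generated by those $x_j$ with $j \equiv n \pmod 2$, all of which are $\le n$; in both cases the induction above already produces these elements. Hence the proof reads:

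\begin{proof}
Since $Z(x_{n+1})$ and $Z(x_n)$ are $\Gamma_W$-invariant, the open immersions $i_\pm \colon U_\pm \hookrightarrow \A^{n+1} \setminus Z(\I_\pm)$ are $\Gamma_W$-equivariant. As $\Gamma_W$ is linearly reductive, Corollary~\ref{partial compact toric} reduces the claim to the containments
\[
\I_+ \subseteq \sqrt{\partial W, x_{n+1}} \qquad \text{and} \qquad \I_- \subseteq \sqrt{\partial W, x_n}.
\]
From $\partial_{x_n} W = x_{n-1}^{a_{n-1}} + a_n x_n^{a_n - 1} x_{n+1}^b$ we see $x_{n-1} \in \sqrt{\partial W, x_{n+1}}$ and $x_{n-1} \in \sqrt{\partial W, x_n}$. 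For $1 < i < n$ we have $\partial_{x_i} W = x_{i-1}^{a_{i-1}} + a_i x_i^{a_i - 1} x_{i+1}$, so if $x_i$ lies in $\sqrt{\partial W, x_{n+1}}$ (respectively $\sqrt{\partial W, x_n}$) then so does $x_{i-1}$. By descending induction, $x_1, \ldots, x_{n-1} \in \sqrt{\partial W, x_{n+1}}$ and $x_1, \ldots, x_{n-1} \in \sqrt{\partial W, x_n}$. Since $\I_+ = \langle x_{n+1}, x_j \mid j \not\equiv n \pmod 2 \rangle$ and every such $j$ satisfies $j \le n-1$, the first containment follows; since $\I_- = \langle x_j \mid j \equiv n \pmod 2 \rangle$ and every such $j$ satisfies $j \le n$, with $x_n$ adjoined, the second follows as well.
\end{proof}
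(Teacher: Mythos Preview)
Your proof is correct and follows precisely the same approach as the paper's: reduce via Corollary~\ref{partial compact toric} to the ideal containments $\I_\pm \subseteq \sqrt{\partial W, x_{n+1}}$ (resp.\ $\sqrt{\partial W, x_n}$), then run the identical descending induction from $\partial_{x_n}W$ down through $\partial_{x_i}W$ for $1<i<n$. The paper simply says ``the proof is almost the same as that of Lemma~\ref{lem: critical localize}; the only difference is the computation of $\partial_{x_n}W$,'' which is exactly what you have spelled out in full.
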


\begin{proof}
The proof is almost the same as that of Lemma~\ref{lem: critical localize}. The only difference is the computation of $\partial_{x_n}W$; however, the conclusion  that  $x_{n-1} \in \sqrt{\partial W, x_{n+1}}$ (respectively $\sqrt{\partial W, x_n}$) still holds.
\end{proof}

\begin{lemma} \label{lem: milnor number2}
The following identity holds.
\[
\mu(w_+^T) - \mu(w_-^T) = \sum d_i 
\]
\end{lemma}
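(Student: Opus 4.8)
The plan is to mimic the proof of Lemma~\ref{lem: milnor number}, reducing the identity $\mu(w_+^T) - \mu(w_-^T) = \sum_{i=1}^{n+1} d_i$ to an elementary (if tedious) algebraic manipulation. First I would record the relevant Milnor numbers. Since $w_+$ is a chain polynomial of length $n$ with exponents $a_1, \ldots, a_n$, Lemma~\ref{lem: milnor chain} gives a closed formula for $\mu(w_+^T)$ (splitting into the $n$ even and $n$ odd cases). Since $w_- = (x_1^{a_1}x_2 + \cdots + x_{n-1}^{a_{n-1}}) + x_{n+1}^b$ is a Thom--Sebastiani sum of a chain polynomial of length $n-1$ and a Fermat polynomial $x_{n+1}^b$, Lemma~\ref{lem: milnor multiplicative} together with the formula $\mu((x^b)^T) = \mu(x^b) = b-1$ and Lemma~\ref{lem: milnor chain} (applied to length $n-1$, so the parity swaps) gives $\mu(w_-^T) = (b-1)\,\mu\big((x_1^{a_1}x_2 + \cdots + x_{n-1}^{a_{n-1}})^T\big)$.

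Next I would write out $\sum_{i=1}^{n+1} d_i$ explicitly using \eqref{pi equation for chain}: $\sum d_i = a_1\cdots a_n + \sum_{j=1}^n (-1)^{n+j-1} b \prod_{i=1}^{j-1} a_i$. The $a_1\cdots a_n$ term will match the leading $\prod t_i$ term coming from $\mu(w_+^T)$, and the alternating sum $\sum_{j=1}^n (-1)^{n+j-1} b \prod_{i=1}^{j-1}a_i$ should be rearranged — collecting consecutive pairs of terms as in the telescoping computation in the proof of Lemma~\ref{sign of sum of cis} — to match exactly the correction term $-(b-1)\,\mu(\text{length }n-1 \text{ chain})$ against the remaining contribution from $\mu(w_+^T)$. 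Concretely, one verifies that $\mu(w_+^T) - a_1\cdots a_n$ is the (negative of the) sum over the appropriate residues in Lemma~\ref{lem: milnor chain} for the length-$n$ chain, and this plus $(b-1)\mu(\text{length } n-1)$ equals the alternating $d_i$-sum. I would handle the two parity cases ($n$ even, $n$ odd) separately since the formula in Lemma~\ref{lem: milnor chain} and the parity of the length-$(n-1)$ chain both flip.

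The main obstacle — really the only one — is bookkeeping: keeping the two parity cases, the shifted index in the length-$(n-1)$ chain formula, and the sign $(-1)^{n+j-1}$ all consistent so that the partial products $\prod_{i=1}^{k} a_i$ line up on both sides. There is no conceptual difficulty; it is the same style of ``simple calculation plugging in the Milnor numbers carefully'' invoked for Lemma~\ref{lem: milnor number}, and I would simply state it as such, perhaps with one representative case written out. One sanity check worth performing along the way: specializing $b = 2$ (or comparing with the $n=1,2$ base cases of the chain formula) should make the identity transparent and catch any sign error.

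\begin{proof}
As in Lemma~\ref{lem: milnor number}, this is a direct computation. Write $\nu_m$ for the Milnor number of the transpose of the length-$m$ chain polynomial $x_1^{a_1}x_2 + \cdots + x_{m-1}^{a_{m-1}}x_m + x_m^{a_m}$, as given by Lemma~\ref{lem: milnor chain}. Then $\mu(w_+^T) = \nu_n$, while by Lemmas~\ref{lem: milnor multiplicative} and~\ref{lem: milnor chain} (and $\mu((x_{n+1}^b)^T) = b-1$),
\[
\mu(w_-^T) = (b-1)\,\nu_{n-1}.
\]
On the other hand, by \eqref{pi equation for chain},
\[
\sum_{i=1}^{n+1} d_i = a_1\cdots a_n + \sum_{j=1}^{n} (-1)^{n+j-1} b \prod_{i=1}^{j-1} a_i.
\]
Using the explicit formula of Lemma~\ref{lem: milnor chain} for $\nu_n$ and $\nu_{n-1}$ and collecting the alternating sum above into consecutive pairs (as in the proof of Lemma~\ref{sign of sum of cis}), one checks in each of the cases $n$ even and $n$ odd that
\[
\nu_n - (b-1)\,\nu_{n-1} = a_1\cdots a_n + \sum_{j=1}^{n} (-1)^{n+j-1} b \prod_{i=1}^{j-1} a_i,
\]
which is the claimed identity.
\end{proof}
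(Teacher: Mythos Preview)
Your proposal is correct and follows exactly the same approach as the paper, which simply says ``Again, this is a simple calculation using Lemmas~\ref{lem: milnor multiplicative} and \ref{lem: milnor chain}.'' Your write-up just makes explicit the ingredients the paper leaves implicit (the factorization $\mu(w_-^T)=(b-1)\nu_{n-1}$ and the parity split), which is fine.
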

\begin{proof}
Again, this is a simple calculation using Lemma~\ref{lem: milnor multiplicative} and Theorem~\ref{thm: milnor num}.
\end{proof}

\begin{theorem}\label{compare chain and smaller chain}
Take the polynomials 
\[
w_+ := W_{n+1} = x_1^{a_1} x_2 + x_2^{a_2}x_3 + \ldots + x_{n-1}^{a_{n-1}}x_n + x_n^{a_n}
\]
and
\[
w_- := W_{n} = x_1^{a_1} x_2 + x_2^{a_2}x_3 + \ldots + x_{n-1}^{a_{n-1}} + x_{n+1}^{b}
\]
for $a_i \geq 2$ and $b\geq 2$. 
The following statements hold:
\begin{enumerate}[label=(\alph*)]

\item If $\mu(w_+^T) < \mu(w_-^T)$, then we have a semi-orthogonal decomposition
\[
\dabs[ \A^n, \Gamma_{w_-}, w_-] \cong \langle E_1, \ldots, E_{\mu(w_-^T) - \mu(w_+^T)}, \dabs[ \A^n, \Gamma_{w_+}, w_+]\rangle
\]
where each  $E_j$ is an exceptional object.
\item If $\mu(w_+^T) = \mu(w_-^T)$, then we have the equivalence
\[
\dabs[ \A^n, \Gamma_{w_+}, w_+] \cong \dabs[ \A^n, \Gamma_{w_-}, w_-].
\] 
\item If $\mu(w_+^T) > \mu(w_-^T)$, then we have a semi-orthogonal decomposition
\[
\dabs[ \A^n, \Gamma_{w_+}, w_+] \cong \langle E_1, \ldots, E_{\mu(w_+^T) - \mu(w_-^T)}, \dabs[ \A^n, \Gamma_{w_-}, w_-]\rangle
\]
where each  $E_j$ is an exceptional object.
\end{enumerate}
\end{theorem}

\begin{proof}
The proof is verbatim as in Theorem~\ref{compare chain and loop} using Lemma~\ref{lem: critical localize chain}  instead of Lemma~\ref{lem: critical localize} and Lemma~\ref{lem: milnor number2} instead of Lemma~\ref{lem: milnor number}.
\end{proof}

Again, we compute the sign of difference of the Milnor numbers to apply the theorem.

\begin{lemma}\label{sign of sum of cis chain}
If $b \leq a_n$, then $\mu(w_+^T) - \mu(w_-^T) \geq 0$. 
\end{lemma}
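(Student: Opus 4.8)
The plan is to reduce the inequality, via Lemma~\ref{lem: milnor number2}, to the purely arithmetic statement $\sum_{i=1}^{n+1} d_i \ge 0$, and then to verify this directly from the explicit formulas in \eqref{pi equation for chain}, following the same bookkeeping as in the proof of Lemma~\ref{sign of sum of cis}. First I would substitute $d_j = (-1)^{n+j-1} b \prod_{i=1}^{j-1} a_i$ for $1 \le j \le n$ and $d_{n+1} = \prod_{i=1}^{n} a_i$. The key first move is to observe that the $j=n$ summand is $d_n = -b\prod_{i=1}^{n-1} a_i$, so that
\[
d_n + d_{n+1} = \Big(\prod_{i=1}^{n-1} a_i\Big)(a_n - b),
\]
which is $\ge 0$ precisely because of the hypothesis $b \le a_n$. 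This is the only point at which that hypothesis is used.

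It then remains to show that the truncated alternating sum $\sum_{j=1}^{n-1} d_j = \sum_{j=1}^{n-1} (-1)^{n+j-1} b \prod_{i=1}^{j-1} a_i$ is nonnegative. I would handle this by pairing each negative summand $d_j$ (i.e.\ the one with $j \equiv n \pmod 2$, for which $(-1)^{n+j-1}=-1$) with its successor, using $d_j + d_{j+1} = b\big(\prod_{i=1}^{j-1} a_i\big)(a_j - 1) \ge 0$ since every $a_i \ge 2$. When $n$ is odd these pairs exhaust all of $j = 1, \ldots, n-1$, so the sum is a sum of nonnegative terms; when $n$ is even the single summand $d_1 = (-1)^n b = +b$ is left unpaired and is positive. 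Either way $\sum_{j=1}^{n-1} d_j \ge 0$, hence $\sum_{i=1}^{n+1} d_i \ge 0$, with equality occurring only in the degenerate case $n=1$ (where the sum is $a_1 - b$ and vanishes when $b=a_n$). Splitting into the two cases $n$ even and $n$ odd and writing out the pair contributions explicitly, just as in Lemma~\ref{sign of sum of cis}, finishes the proof.

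I do not expect a genuine obstacle here: this is an elementary manipulation of closed-form products, and it is exactly the chain-type analogue of the loop-type estimate already established in Lemma~\ref{sign of sum of cis}. The only care required is to orient the pairing so that within each pair the dominant factor $\prod_{i=1}^{j} a_i$ carries the positive sign, and to track the parity of $n$ so that a leftover summand, when one occurs, is the favorably signed one.
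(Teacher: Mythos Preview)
Your proposal is correct and follows essentially the same approach as the paper: reduce via Lemma~\ref{lem: milnor number2} to $\sum_i d_i \ge 0$, split off $d_n + d_{n+1} = (a_n - b)\prod_{i=1}^{n-1} a_i$ using the hypothesis $b \le a_n$, and handle the remaining alternating sum by pairing consecutive terms according to the parity of $n$. Your write-up is in fact slightly more careful than the paper's displayed formulas (which drop the overall factor of $b$ in the paired sums), but the argument is identical.
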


\begin{proof}
By Lemma~\ref{lem: milnor number2}, it is equivalent to show that $\sum_i d_i \geq 0$. If $n$ is odd, then we have that
$$
\sum_{i=1}^{n+1} d_i= (a_n-b) a_1 \cdots a_{n-1} + \sum_{k=1}^{(n-1)/2} (a_{2k-1} -1) a_1 \cdots a_{2k-2}b \geq 0.
$$
If $n$ is even, then we have that
$$
\sum_{i=1}^{n+1} d_i= (a_n-b) a_1 \cdots a_{n-1} + \left(\sum_{k=1}^{(n-2)/2} (a_{2k} -1) a_1 \cdots a_{2k-1}\right)b + b > 0.
$$
\end{proof}

We now reprove Corollary 1.6 of \cite{HO}.

\begin{corollary}\label{Exceptional Collection Chain}
Let $w_{chain} = x_1^{a_1} x_2 + x_2^{a_2}x_3 + \ldots + x_{n-1}^{a_{n-1}}x_n + x_n^{a_n}$ be a chain polynomial of length $n$ with $a_i \geq 2$. Then $\dabs{[ \A^n, \Gamma_{w_{chain}}, w_{chain}]}$ has a full exceptional collection of length $\mu(w_{chain})$.
\end{corollary}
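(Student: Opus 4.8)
The plan is to argue by induction on the length $n$ of the chain polynomial, using Theorem~\ref{compare chain and smaller chain} as the inductive step to reduce the length by one. For the base case $n=1$ we have $w_{chain} = x_1^{a_1}$, a Fermat polynomial, and Section~\ref{warmup fermat} already establishes that $\dabs{[\A^1, \Gamma_{x_1^{a_1}}, x_1^{a_1}]}$ has a full exceptional collection $\langle E_1, \ldots, E_{a_1 - 1}\rangle$ of length $a_1 - 1 = \mu(x_1^{a_1})$, which agrees with the Milnor number formula.

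For the inductive step, suppose the result holds for all chain polynomials of length $< n$ (and, implicitly, for Fermat polynomials, which is the base case). Given $w_{chain} = x_1^{a_1}x_2 + \ldots + x_{n-1}^{a_{n-1}}x_n + x_n^{a_n}$ of length $n$, choose any $b$ with $2 \leq b \leq a_n$ and form the polynomial $W$ of \eqref{eq: Wchainchain}. Then $w_+ = W_{n+1} = w_{chain}$ is our chain polynomial of length $n$, while $w_- = W_n$ is, by \eqref{eq: chain-}, the Thom-Sebastiani sum of a chain polynomial of length $n-1$ and the Fermat polynomial $x_{n+1}^b$. By the inductive hypothesis (applied to the length-$(n-1)$ chain and to the Fermat summand) together with the Thom-Sebastiani formula for gauged LG models \cite{BFK14a, BFK14b} — here I would cite the reduction recalled via \cite[Corollary 2.40]{BFK14b} in the introduction, combined with Lemma~\ref{lem: milnor multiplicative} for the length count — the category $\dabs{[\A^n, \Gamma_{w_-}, w_-]}$ has a full exceptional collection of length $\mu(w_-^T)$. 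By Lemma~\ref{sign of sum of cis chain}, since $b \leq a_n$ we have $\mu(w_+^T) - \mu(w_-^T) = \sum d_i \geq 0$, so Theorem~\ref{compare chain and smaller chain}(b) or (c) applies and yields a semi-orthogonal decomposition
\[
\dabs{[\A^n, \Gamma_{w_+}, w_+]} \cong \langle E_1, \ldots, E_{\mu(w_+^T) - \mu(w_-^T)}, \dabs{[\A^n, \Gamma_{w_-}, w_-]} \rangle,
\]
with each $E_j$ exceptional. Splicing the full exceptional collection of $\dabs{[\A^n, \Gamma_{w_-}, w_-]}$ into the right-hand term produces a full exceptional collection for $\dabs{[\A^n, \Gamma_{w_+}, w_+]} = \dabs{[\A^n, \Gamma_{w_{chain}}, w_{chain}]}$ of total length $(\mu(w_+^T) - \mu(w_-^T)) + \mu(w_-^T) = \mu(w_+^T) = \mu(w_{chain})$, where the last equality uses that $w_{chain}$ and its transpose have the same Milnor number (the transpose of a length-$n$ chain is again a length-$n$ chain with the exponents reversed, and Lemma~\ref{lem: milnor chain} is symmetric enough to give the same value — alternatively one invokes the general fact $\mu(w) = \mu(w^T)$ for invertible polynomials).

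The main subtlety — the step I would be most careful about — is the bookkeeping of lengths: one must verify that the numerical identity $\mu(w_+^T) - \mu(w_-^T) = \sum d_i$ of Lemma~\ref{lem: milnor number2} is compatible with the count of exceptional objects coming out of \cite[Theorem 3.5.2]{BFK12} (namely $t = |\ker \overline{\chi_{n+1}}| \cdot |\sum c_i| = |\sum d_i|$, exactly as computed in the proof of Theorem~\ref{compare chain and loop}), and that $\mu(w_-^T)$ as produced by the inductive hypothesis for the Thom-Sebastiani summand matches $\mu$ of the length-$(n-1)$ chain times $\mu$ of $x_{n+1}^b = b-1$ via Lemma~\ref{lem: milnor multiplicative}. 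Once these identities line up, the induction closes cleanly. One minor point worth a remark: the case $b = a_n$ may force $\mu(w_+^T) = \mu(w_-^T)$, in which case Theorem~\ref{compare chain and smaller chain}(b) gives an outright equivalence and no new exceptional objects are added — this is still consistent with the length count since then $\sum d_i = 0$.
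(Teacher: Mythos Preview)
Your proposal is correct and follows essentially the same approach as the paper: induction on $n$ with the Fermat base case from \S\ref{warmup fermat}, then for the inductive step choosing $b \leq a_n$, applying the induction hypothesis together with \cite[Corollary 2.40]{BFK14b} and Lemma~\ref{lem: milnor multiplicative} to $w_-$, and invoking Theorem~\ref{compare chain and smaller chain}(b) or (c) via Lemma~\ref{sign of sum of cis chain}. Your extra remark that the statement's length $\mu(w_{chain})$ matches the $\mu(w_+^T)$ produced by the argument (using $\mu(w) = \mu(w^T)$ for invertible polynomials) is a useful clarification that the paper leaves implicit.
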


\begin{proof}
We proceed by induction on $n$.  The base case $n=1$ is contained in \S\ref{warmup fermat}.

Now let $n>1$ and choose $b \leq a_n$. Consider the polynomials $W$, $w_+$ and $w_-$ as above. The polynomial $w_-$ is the Thom-Sebastiani sum of two polynomials $x_{n+1}^b$ and $x_1^{a_1}+\ldots x_{n-2}^{a_{n-2}}x_{n-1} + x_{n-1}^{a_{n-1}}$, hence, by the induction hypothesis, Lemma~\ref{lem: milnor multiplicative}, and Corollary 2.40 of \cite{BFK14b}, the derived category $\dabs{ [\A^n, \Gamma_{w_{-}}, w_{-}]}$ has an exceptional collection of length $\mu(w_-^T)$. By Lemmas~\ref{lem: milnor number2} and  \ref{sign of sum of cis chain}, the inequality $\mu(w_+^T) \geq \mu(w_-^T)$ holds. Apply case (b) or (c) of Theorem~\ref{compare chain and smaller chain} to see that 
$$
\dabs[ \A^n, \Gamma_{w_+}, w_+] \cong \langle E_1, \ldots, E_{\mu(w_+^T)  - \mu(w_-^T) }, \dabs[ \A^n, \Gamma_{w_-}, w_-]\rangle,
$$
hence $\dabs[ \A^n, \Gamma_{w_+}, w_+]$ has a semi-orthogonal decomposition of objects which have an exceptional collection, hence it has an exceptional collection.
\end{proof}

\subsection{The Gorenstein Case}

\begin{definition}
Let $w, v$ be invertible polynomials.  We say that $w,v$ are related by a \textbf{Kreuzer-Skarke cleave} if they have the same Milnor number and $A_w, A_v$ differ by only one row.
\end{definition}

\begin{corollary} \label{cor: KS cleaves}
Suppose $w,v$ are related by a sequence of Kreuzer-Skarke cleaves.  Then there is an equivalence of categories
\[
\dabs[ \A^n, \Gamma_{w}, w] \cong \dabs[ \A^n, \Gamma_{v}, v].
\]
\end{corollary}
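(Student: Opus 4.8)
The plan is to reduce to a single cleave and then match it to one of the two explicit comparison theorems already established.

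First I would reduce. By transitivity of equivalence it suffices to treat a single cleave, so assume $A_w$ and $A_v$ agree in every column but one; after permuting variables that column is the $x_{j_0}$-column, and $\mu(w)=\mu(v)$. Using the Kreuzer--Skarke normal form \cite{KS92}, write $w = u \oplus w'$ and $v = u \oplus v'$ (Thom--Sebastiani), where $u$ is the sum of those indecomposable summands occurring in both decompositions; since the cleave alters only the $x_{j_0}$-column, every summand not mingling with $x_{j_0}$ is common, so $(w',v')$ is again a cleave (equal Milnor numbers by Lemma~\ref{lem: milnor multiplicative}, still differing in a single column) and $w'$ is indecomposable. By Corollary 2.40 of \cite{BFK14b} it now suffices to construct an equivalence $\dabs[\A^{n'}, \Gamma_{w'}, w'] \cong \dabs[\A^{n'}, \Gamma_{v'}, v']$.

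Next I would identify the two types of cleave. A variable of an indecomposable invertible polynomial lies in at most two monomials, so a single--column change either (i) attaches/detaches a Fermat variable at the end of a chain, or (ii) opens a loop into a chain; any other single--column change alters $\mu$ (a short computation with Lemmas~\ref{lem: milnor loop} and \ref{lem: milnor chain}, of the kind carried out in Lemmas~\ref{sign of sum of cis} and \ref{sign of sum of cis chain}). In case (ii), after a permutation of variables $(w',v')$ is exactly the pair $(w_+,w_-)$ of Theorem~\ref{compare chain and loop}: the family $W$ of \eqref{eq: Wloopchain} together with the one--parameter subgroup $\lambda$ realizes $w'$ and $v'$ as the two GIT quotients $X_\pm$, and Lemmas~\ref{lem: stack iso} and \ref{lem: critical localize} identify the factorization categories with $\dabs[X_\pm, W]$. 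Case (i) is the same using \eqref{eq: Wchainchain}, Theorem~\ref{compare chain and smaller chain}, and Lemma~\ref{lem: critical localize chain}.

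Then I would run the dichotomy. The hypothesis $\mu(w')=\mu(v')$, pushed through Lemma~\ref{lem: milnor number} (resp.\ Lemma~\ref{lem: milnor number2}) and the relation between the Milnor number of an invertible polynomial and that of its transpose, becomes $\sum_i d_i = 0$, equivalently $\sum_i c_i = 0$; so we are in case (b) of Theorem~\ref{compare chain and loop} (resp.\ Theorem~\ref{compare chain and smaller chain}), which yields $\dabs[X_+, W] \cong \dabs[X_-, W]$ and hence $\dabs[\A^{n'}, \Gamma_{w'}, w'] \cong \dabs[\A^{n'}, \Gamma_{v'}, v']$. The hard part will be the classification of cleaves and the accompanying Milnor--number bookkeeping: one must verify that attaching anything larger than a single Fermat variable, or making a chain--internal exponent change, strictly changes $\mu$ (so that (i) and (ii) exhaust all cleaves), and one must be careful that $\mu(w)=\mu(v)$ -- a condition on $w$ and $v$ themselves -- really forces $\sum_i d_i = 0$ and so lands us in case (b) rather than case (a) or (c), given that the comparison theorems are phrased in terms of the transposed Milnor numbers $\mu(w^T)$, $\mu(v^T)$ and that $\mu(u)\neq\mu(u^T)$ in general.
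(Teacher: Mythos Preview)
The paper's proof is a single sentence: ``This follows immediately from Theorems~\ref{compare chain and loop} and~\ref{compare chain and smaller chain}.'' Your elaboration---reducing to a single cleave, stripping off the common Thom--Sebastiani summands, and matching what remains to one of the two explicit $W$-families---is exactly the content behind that sentence, so the overall strategy agrees with the paper.

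The concern you flag at the end, however, is genuine and is not addressed by the paper. The definition of a Kreuzer--Skarke cleave asks for $\mu(w)=\mu(v)$, whereas case~(b) of the two comparison theorems is triggered by $\mu(w_+^T)=\mu(w_-^T)$; as you suspect, these conditions are not equivalent. For instance, $w_+=x_1^2x_2+x_2^3$ and $w_-=x_1^2+x_2^5$ differ in a single column and satisfy $\mu(w_+)=\mu(w_-)=4$, yet $\mu(w_+^T)=5$ while $\mu(w_-^T)=4$, so $\sum d_i=1$ and one lands in case~(c), not~(b). With the definition read literally, the one-line proof therefore does not go through, and neither does yours. The resolution is that the only place the corollary is invoked is Lemma~\ref{lem: KS Goren}, whose proof establishes $\sum_i d_i=0$ directly; by Lemmas~\ref{lem: milnor number} and~\ref{lem: milnor number2} this is precisely $\mu(w_+^T)=\mu(w_-^T)$. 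So the intended reading of ``same Milnor number'' is almost certainly $\mu(w^T)=\mu(v^T)$, and with that correction your argument lands in case~(b) automatically, without any of the Milnor-number bookkeeping you were worried about. Your proposed classification of all cleaves into types~(i) and~(ii) then also becomes unnecessary: in practice the only cleaves that arise are, by construction, those coming from the two $W$-families of \S\ref{section loop to chain} and \S\ref{sec: chain to smaller chain}.
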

\begin{proof}
This follows immediately from Theorems  \ref{compare chain and loop} and \ref{compare chain and smaller chain}.
\end{proof}

\begin{lemma} \label{lem: KS Goren}
Let $w$ be an invertible polynomial whose dual polynomial $w^T$ quasi-homogeneous has weights $r_i$ and degree $d^T$.  Assume $r_i$ divides $d^T$ for all $i$. Then $w$ is related to  $\sum x_i^{d^T/r_i}$ by a sequence of Kreuzer-Skarke cleaves.
\end{lemma}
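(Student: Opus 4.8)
The plan is to reduce an arbitrary invertible $w$, via a sequence of Kreuzer--Skarke cleaves, to the Fermat form $\sum_i x_i^{d^T/r_i}$, working type-by-type according to the Kreuzer--Skarke classification. Recall that a Kreuzer--Skarke cleave changes a single column of the exponent matrix while preserving the Milnor number; by Corollary~\ref{cor: KS cleaves} each such move is invisible to the derived category, but here we only need the combinatorial claim. Since $w$ decomposes (up to permutation) as a Thom--Sebastiani sum of Fermat, chain, and loop atoms, and a cleave can be performed on a single atom (augmenting that atom's exponent matrix by one extra variable and cleaving, exactly as set up in Sections~\ref{section loop to chain} and \ref{sec: chain to smaller chain}), it suffices to treat each atom separately: I would show that every chain atom and every loop atom appearing in $w$ can be cleaved down to a Thom--Sebastiani sum of Fermats, and that the resulting Fermat exponents are forced to be $d^T/r_i$.

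First I would handle the loop atoms. Comparing \eqref{eq: Wloopchain}--\eqref{eq: loop-}, a loop $w_+ = x_1^{a_1}x_2 + \cdots + x_n^{a_n}x_1$ of length $n$ and the chain $w_- = x_1^{a_1}x_2 + \cdots + x_{n-1}^{a_{n-1}} + x_1 x_{n+1}^{b}$ (of length $n+1$ after relabelling, but still resolvable into a chain of length $n-1$ plus structure) are related by a cleave precisely when $\mu(w_+^T) = \mu(w_-^T)$, i.e.\ when $\sum d_i = 0$ by Lemma~\ref{lem: milnor number}. The Gorenstein hypothesis $r_i \mid d^T$ is what makes this equality achievable: I would compute the dual weights $r_i$ of the loop from the inverse of its exponent matrix and show that $r_i \mid d^T$ for all $i$ forces the arithmetic relation among the $a_i$ that makes $\sum d_i$ vanish for an appropriate choice of the new exponent $b$, turning the loop into a chain without changing $\mu$. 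Iterating, each loop becomes a chain.

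Next I would handle chain atoms by the analogous move from Section~\ref{sec: chain to smaller chain}: a chain of length $n$ is cleaved to (chain of length $n-2$) $\oplus$ (Fermat) $\oplus$ (Fermat) whenever the corresponding $\sum d_i$ vanishes, and again the divisibility $r_i \mid d^T$ is exactly the condition guaranteeing we can pick the auxiliary exponent so that $\mu$ is preserved. Descending induction on the chain length then strips every chain down to Fermats. Finally, once $w$ has been cleaved to a pure Fermat sum $\sum_i x_i^{e_i}$, its dual is again $\sum_i x_i^{e_i}$, whose weights are $d^T/e_i$ with degree $d^T$; since the sequence of cleaves preserves the dual weight system $(r_i; d^T)$ (cleaves preserve $\mu$ and, being column operations on $A_w$ of the controlled type in the theorems, preserve the quasihomogeneity data of the dual), we get $e_i = d^T/r_i$, which is the desired form. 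The main obstacle I anticipate is the bookkeeping in the loop-to-chain and chain-to-smaller-chain steps: one must verify that the Gorenstein divisibility condition survives each cleave (so the induction hypothesis applies to the smaller pieces) and that the auxiliary exponent $b$ can always be chosen in the allowed range $b \le a_n$, $b \ge 2$ to force $\sum d_i = 0$ rather than merely $\ge 0$; this is a purely number-theoretic check on the explicit formulas \eqref{pi equation for loop} and \eqref{pi equation for chain}, but it is where all the content sits.
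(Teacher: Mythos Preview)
Your overall strategy matches the paper's: iterate the moves of \S\ref{section loop to chain} and \S\ref{sec: chain to smaller chain}, choosing the auxiliary exponent $b$ so that $\sum d_i = 0$ at each step. The paper makes this choice explicit and uniform via Cramer's rule: writing $d_{n+1} = \det A$ and $d_j = -b\det A\,(A^{-1})_{jn}$ for $1\le j\le n$, and recalling that the dual weight is $r_n = d^T\sum_j (A^{-1})_{jn}$, one obtains $\sum_i d_i = \det A\bigl(1 - b r_n/d^T\bigr)$. Thus $b = d^T/r_n$ does the job, and it is an integer precisely by the Gorenstein hypothesis.

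Your anticipated constraint $b \le a_n$, however, is a genuine misstep that would block the argument. That inequality appears only in Lemmas~\ref{sign of sum of cis} and \ref{sign of sum of cis chain}, where it forces $\sum d_i \ge 0$ for the one-sided decomposition of case~(c); it is \emph{not} a hypothesis of Theorems~\ref{compare chain and loop} or \ref{compare chain and smaller chain}, nor part of the definition of a Kreuzer--Skarke cleave, which only needs $\mu(w_+^T)=\mu(w_-^T)$ (case~(b)). In fact the dual weight relation $a_n r_n + r_{n-1} = d^T$ gives $b = d^T/r_n = a_n + r_{n-1}/r_n > a_n$, so the required $b$ \emph{always} violates $b \le a_n$; insisting on it leaves you stuck. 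Once you drop that constraint, the remaining bookkeeping is straightforward: from the weight relations one checks that the dual weights of $w_-$ are just $r_1,\dots,r_{n-1}$ together with $d^T/b = r_n$, so the divisibility hypothesis persists and the induction runs. (Minor slip: the chain move produces a chain of length $n-1$ plus one Fermat, not length $n-2$ plus two.)
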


\begin{proof}
The proof is the same for the setups in \S\ref{section loop to chain} and \S\ref{sec: chain to smaller chain}, so we prove them simultaneously. 
First, note that $d_{n+1} = \det A_w$ and by Cramer's rule $d_j = -b\det A_w(A_w^{-1})_{jn}$ for $1\leq j \leq n$. Furthermore, the weights $r_i$ of the dual polynomial $w^T$ are obtained by the formula $r_i = \sum_{j=1}^n (A_w^{-1})_{ji} d^T$. 
We see that
\[
\sum_{i=0}^n d_j = \det A_w  \left ( 1- \sum_{j=1}^n b (A_w^{-1})_{jn} \right ) = \det A_w \left ( 1 - \frac{br_n}{d^T} \right ).
\]
If we take $b = d^T / r_n$, we have that $\sum_{i=0}^n d_i = 0$, hence $\sum_{i=0}^n c_i = 0$.  

If we start with a loop, we use the setup in \S\ref{section loop to chain} to obtain a chain. If we have a chain of length $n$, we use the setup in \S\ref{sec: chain to smaller chain} to get the Thom-Sebastiani sum chain of length $n-1$ and a Fermat polynomial. Since $r_i$ divides $d$ for all $i$, we can iterate the process, ending with a Fermat polynomial. 
\end{proof}

\begin{corollary} \label{cor: fermat case}
Let $w$ be an invertible polynomial.  Assume that the dual polynomial $w^T$ has weights $r_i$ such that $r_i$ divides the degree $d^T$.  Then, there is an equivalence of categories.
\[
\dabs[ \A^n, \Gamma_{w}, w] \cong \dabs[ \A^n, \Gamma_{\sum x_i^{d^T / r_i}}, \sum x_i^{d^T / r_i}].
\]
\end{corollary}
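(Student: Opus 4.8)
The plan is to deduce the statement from Lemma~\ref{lem: KS Goren} together with Corollary~\ref{cor: KS cleaves}; the one point that needs attention is that Lemma~\ref{lem: KS Goren} is established via the explicit matrix setups of \S\ref{section loop to chain} and \S\ref{sec: chain to smaller chain}, which treat an \emph{indecomposable} chain or loop, so I would first reduce to that case. By the Kreuzer-Skarke classification, write $w = \sum_\ell w^{(\ell)}$ as a Thom-Sebastiani sum over disjoint blocks of variables, with each $w^{(\ell)}$ of Fermat, chain, or loop type, so that $w^T = \sum_\ell (w^{(\ell)})^T$. Writing $d^T_\ell$ and $r^{(\ell)}_i$ for the intrinsic degree and weights of $(w^{(\ell)})^T$, one may normalize so that $w^T$ has degree $d^T = \mathrm{lcm}_\ell\, d^T_\ell$, whence $r_i = (d^T/d^T_\ell)\, r^{(\ell)}_i$ for $i$ in block $\ell$ (both the hypothesis $r_i \mid d^T$ and the conclusion are invariant under a common rescaling of $d^T$ and the $r_i$). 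Hence $r_i \mid d^T$ for all $i$ is equivalent to $r^{(\ell)}_i \mid d^T_\ell$ for every block and every $i$ in it, and moreover $d^T/r_i = d^T_\ell/r^{(\ell)}_i$, so the target polynomial $\sum_i x_i^{d^T/r_i}$ is exactly the Thom-Sebastiani sum $\sum_\ell \big( \sum_{i \in \ell} x_i^{d^T_\ell/r^{(\ell)}_i} \big)$.

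I would then apply Lemma~\ref{lem: KS Goren} to each summand $w^{(\ell)}$ separately (there is nothing to do when $w^{(\ell)}$ is already Fermat), producing for each $\ell$ a sequence of Kreuzer-Skarke cleaves relating $w^{(\ell)}$ to $\sum_{i \in \ell} x_i^{d^T_\ell/r^{(\ell)}_i}$. A cleave carried out within a single block is a Kreuzer-Skarke cleave of $w$ itself: it changes exactly one column of the block-diagonal matrix $A_w$, and it preserves the total Milnor number since $\mu(w) = \prod_\ell \mu(w^{(\ell)})$ by Lemma~\ref{lem: milnor multiplicative}, the cleave leaving the factors $\mu(w^{(\ell')})$ with $\ell' \ne \ell$ untouched and preserving $\mu(w^{(\ell)})$ by definition. (Such a cleave may refine the decomposition --- for instance replacing a loop block by a chain-plus-Fermat block --- but $A_w$ stays block-diagonal for the finer decomposition, so the induction continues.) Concatenating these blockwise sequences exhibits $w$ as related to $\sum_i x_i^{d^T/r_i}$ by a single sequence of Kreuzer-Skarke cleaves.

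Corollary~\ref{cor: KS cleaves} then converts this sequence of cleaves into the asserted equivalence of categories. Beyond citing the two results above, the only real content is the degree-and-weight bookkeeping under Thom-Sebastiani normalization and the remark that blockwise cleaves are global cleaves; neither is difficult, so I do not anticipate a genuine obstacle --- the substantive work was already done in Lemma~\ref{lem: KS Goren}.
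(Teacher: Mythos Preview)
Your proposal is correct and follows essentially the same route as the paper: deduce the equivalence from Lemma~\ref{lem: KS Goren} together with Corollary~\ref{cor: KS cleaves}. The paper's proof is literally that one sentence, taking Lemma~\ref{lem: KS Goren} at face value as a statement about arbitrary invertible $w$; you have simply spelled out the Thom--Sebastiani bookkeeping (blockwise weights, multiplicativity of $\mu$, and that a cleave inside one block is a global Kreuzer--Skarke cleave) that the paper's proof of Lemma~\ref{lem: KS Goren} leaves implicit in its iteration.
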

\begin{proof}
This follows immediately from Corollary~\ref{cor: KS cleaves} and Lemma~\ref{lem: KS Goren}.
\end{proof}

\subsection{Proof of Theorem~\ref{proof of conjecture}}\label{proof of HO conjecture}

\begin{proof}[Proof of Theorem~\ref{proof of conjecture}]
Recall that the Kreuzer-Skarke classification \cite{KS92} states that an invertible polynomial is the Thom-Sebastiani sum of the following types of polynomials:
\begin{enumerate}
\item Fermat type: $w = x^r$,
\item Chain type: $w = x_1^{a_1}x_2 + x_2^{a_2} x_3 + \ldots + x_{n-1}^{a_{n-1}}x_n + x_n^{a_n}$, and 
\item Loop type: $w = x_1^{a_1}x_2 + x_2^{a_2} x_3 + \ldots + x_{n-1}^{a_{n-1}}x_n + x_n^{a_n}x_1$.
\end{enumerate}

By Lemma~\ref{lem: milnor multiplicative} and Corollary 2.40 of \cite{BFK14b},  the statement of the corollary reduces to proving that $\dabs{[\mathbb{A}^n, \Gamma_w, w]}$ has a full exceptional collection for any of the cases above (without taking a Thom-Sebastiani sum).  The Fermat type case is proven in \cite[Corollary 2.9]{Orl09} or in \S\ref{warmup fermat}. The chain case is proven in \cite[Corollary 1.6]{HO} or Corollary~\ref{Exceptional Collection Chain}.  The loop case is then deduced from applying Corollary~\ref{cor: comparison}.
The special case where we get a tilting object follows from Corollary~\ref{cor: fermat case}.
\end{proof}

\end{document}